\newtheorem{theorem}{Theorem}[section]
\newtheorem{lemma}[theorem]{Lemma}
\newtheorem{proposition}[theorem]{Proposition}
\newtheorem{corollary}[theorem]{Corollary}
\theoremstyle{definition}
\newtheorem{definition}[theorem]{Definition}
\newtheorem{example}[theorem]{Example}
\newtheorem{remark}[theorem]{Remark}
\numberwithin{equation}{section}
\DeclareMathOperator{\Po}{Po}
\DeclareMathOperator{\Cl}{Cl}
\DeclareMathOperator{\disc}{disc}
\DeclareMathOperator{\Gal}{Gal}
\DeclareMathOperator{\Int}{Int}
\DeclareMathOperator{\Ker}{Ker}
\DeclareMathOperator{\Coker}{Coker}
\DeclareMathOperator{\res}{res}
\begin{document}
\title{Relative P\'olya group and
P\'olya Dihedral extensions of $\mathbb{Q}$}
\author{Abbas Maarefparvar}
\address{Department of Mathematics, Tarbiat Modares University, 14115-134, Tehran, Iran}
\curraddr{}
\email{a.maarefparvar@modares.ac.ir}
\thanks{}
\author{Ali Rajaei$^{*}$}
\address{Department of Mathematics, Tarbiat Modares University, 14115-134, Tehran, Iran}
\curraddr{}
\email{alirajaei@modares.ac.ir}
\thanks{$^{*}$Corresponding author}

\subjclass[2010]{Primary 11R04, 11R29, 11R34, 11R37, 13F20.}

\dedicatory{}

\begin{abstract}
A number field with trivial P\'olya group \cite{Cahen-Chabert's book} is called a P\'olya field. We define ``relative P\'olya group $\Po(L/K)$"  for  $L/K$ a finite extension of number fields, generalizing the P\'olya group.
 Using cohomological tools in \cite{Brumer-Rosen}, we compute some relative P\'olya groups. As a consequence, we generalize Leriche's results in \cite{Leriche 2014} and prove the triviality of relative P\'olya group  for the Hilbert class field of $K$.
Then we generalize our previous results \cite{Maarefparvar-Rajaei} on P\'olya $S_3$-extensions of $\mathbb{Q}$ to dihedral extensions of $\mathbb{Q}$ of order $2l$, for $l$  an odd prime. 
We also improve Leriche's upper bound in \cite{Leriche 2013} on the number of ramified primes in P\'olya $D_l$-extensions of $\mathbb{Q}$ and prove that for a real (resp. imaginary) P\'olya $D_l$-extension of $\mathbb{Q}$ at most $4$ (resp. $2$) primes ramify.
\end{abstract}

\maketitle
 
\vspace{.2cm} {\noindent \bf{Keywords:}}~ integer valued polynomial, P\'olya field, P\'olya group, relative P\'olya group, dihedral extension.

\vspace{.2cm} {\noindent \bf{Notation.}}~ The following notation will be used throughout this article: 

For a number field $K$, $I(K)$, $P(K)$, $\Cl(K)$,  $\mathcal{O}_K$, $h(K)$, $U_K$, $w_K$, $H(K)$, $\Gamma(K)$ and $D_K$ denote the group of fractional ideals, group of principal fractional ideals, ideal class group, ring of integers, class number, group of units, Dirichlet rank of group of units, Hilbert class field, genus field and discriminant of $K$, respectively. 

For a finite extension $L/K$ of number fields, $\mathcal{N}_{L/K}:\Cl(L) \rightarrow \Cl(K)$ denotes the induced  morphism by the ideal norm morphism $N_{L/K}:I(L) \rightarrow I(K)$. Likewise  ${\epsilon}_{L/K}:\Cl(K) \rightarrow \Cl(L)$ denotes the transfer of ideal classes induced by the morphism $j_{L/K}: \mathfrak{a} \in I(K)  \mapsto \mathfrak{a} \mathcal{O}_L \in  I(L)$.

For a prime ideal $\mathfrak{p}$  of $K$ and a prime ideal  $\mathfrak{B}$ of $L$ above $\mathfrak{p}$, denote the ramification index and residue class degree of $\mathfrak{B}$ over $\mathfrak{p}$ by $e(\mathfrak{B}/\mathfrak{p})$ and $f(\mathfrak{B}/\mathfrak{p})$, respectively.

 %%For an irreducible polynomial $f(X) \in K[X]$ over $K$, denote the discriminant of $f(X)$ and the Galois group of the splitting field of $f(X)$ over $K$ by $\disc(f(X))$ and $\Gal(f/K)$, respectively. 
 
Finally, $l$ is an odd prime number, and  for integer $n \geq 3$, $C_n$ and $D_n$ denote the cyclic group of order $n$ and the dihedral group of order $2n$, respectively.

\section{Introduction} \label{section, Introduction}
Historically, the study of P\'olya fields dates back to P\'olya's results on integer valued polynomials \cite{Polya}.  For a number field $K$, with ring of integers $\mathcal{O}_K$, the ring of integer valued  polynomials on $\mathcal{O}_K$ is defined as follows:
\begin{align*}
\Int(\mathcal{O}_K)=\{f \in K[x] \mid f(\mathcal{O}_K) \subseteq \mathcal{O}_K\}.
\end{align*}

One can show that $\Int(\mathcal{O}_K)$ is free as an $\mathcal{O}_K$-module, but an explicit  $\mathcal{O}_K$-basis for $\Int(\mathcal{O}_K)$ may be difficult to write down, and P\'olya \cite{Polya} was interested in those fields $K$ for which  Int($\mathcal{O}_K$) has an $\mathcal{O}_K$ basis which exactly one member from each degree. Such basis, if it exists, is called a \textit{regular basis}. 

For a nonnegative integer $n$, denote the subset of $K$ formed by $0$ and the leading coefficients of integer valued polynomials of degree $n$ on $\mathcal{O}_K$ by $\mathfrak{J}_n(K)$.  This is a fractional ideal of $\mathcal{O}_K$: $\mathfrak{J}_n(K) \subseteq (n!) ^{-1}\mathcal{O}_K$,  see
\cite[Section 2]{zantema}. P\'olya proved that $\Int(\mathcal{O}_K)$ has a regular basis if and only if all the ideals $\mathfrak{J}_n(K)$ are principal, see \cite[Satz I]{Polya}.  Immediately after P\'olya, Ostrowski \cite{Ostrowski} proved that $\Int(\mathcal{O}_K)$ has a regular basis if and only if all the ideals 
\begin{align*}
\Pi_q(K)=:\prod_{\substack{\mathfrak{m}\in Max(\mathcal{O}_K)\\ N_{K/ \mathbb{Q}}(\mathfrak{m})=q}} \mathfrak{m}
\end{align*}
are principal.

%%Obviously every number field with class number $1$ is a P\'olya field, but not conversely, see Proposition \eqref{proposition, Zantema's result for quadratic Polya field} below. 

Note that for a Galois number field $K$, existence of a regular basis for $\Int(\mathcal{O}_K)$ is equivalent to principality of $\prod_{i=1}^g \mathfrak{P}_i$, where $\mathfrak{P}_1,\mathfrak{P}_2,\dots,\mathfrak{P}_g$ are all distinct prime ideals of $K$ above a ramified prime $p$, see  \cite[Section 1]{zantema}.

\begin{definition} \cite{zantema}
A number field $K$ is called \textit{P\'olya}, if the $\mathcal{O}_K$-module $\Int(\mathcal{O}_K)$ admits a regular basis.
\end{definition}

\begin{definition} \cite[Definition II.3.8]{Cahen-Chabert's book} \label{definition, Polya group}
For a number field $K$, the \textit{P\'olya-Ostrowski group}  or \textit{P\'olya group} of $K$ is the subgroup $\Po(K)$ of $\Cl(K)$ generated by the classes of the ideals $\mathfrak{J}_n(K)$. It is easy to see that this is same as the subgroup generated by the classes of the ideals $\Pi_q(K)$ as well, see \cite[Proposition II.3.9]{Cahen-Chabert's book}.
\end{definition}

Now let $K$ be a Galois extension of $\mathbb{Q}$ with Galois group $G$. One can show that the \textit{Ostrowski  ideals} $\Pi_q(K)$ freely  generate the ambiguous ideals $I(K)^G$, see \cite[Section 2]{Brumer-Rosen}. Thus $\Po(K)$ is the subgroup of $\Cl(K)$ generated by the classes of ambiguous ideals. For quadratic fields, Hilbert proved:

\begin{proposition} \cite[Theorem 106]{Hilbert's book} \label{proposition, Zantema's result for quadratic Polya field}
Let $K$ be a quadratic field and denote the number of ramifid primes in $K/\mathbb{Q}$ by $s_K$. If $K$ is real and the fundamental unit of $K$ has positive norm, then $\# \Po(K)=2^{s_k-2}$. Otherwise $\# \Po(K)=2^{s_k-1}$.
\end{proposition}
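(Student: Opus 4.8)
The plan is to identify $\Po(K)$ with the $2$-torsion of the narrow class group by Galois cohomology, and then invoke Chevalley's ambiguous class number formula. Write $G=\Gal(K/\mathbb{Q})=\{1,\sigma\}$ and let $\mathfrak{p}_1,\dots,\mathfrak{p}_{s_K}$ be the finite ramified primes, so that $\overline{\mathfrak{p}_i}=\mathfrak{p}_i$ and $\mathfrak{p}_i^2=p_i\mathcal{O}_K$ for positive rational primes $p_i$. Let $\Cl^+(K)$ denote the narrow class group, $P^+(K)$ the group of narrowly principal ideals, and $\Po^+(K)\subseteq\Cl^+(K)$ the subgroup generated by the narrow classes of ambiguous ideals. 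Two elementary observations set things up. First, the ideal $\mathfrak{a}\,\sigma(\mathfrak{a})$ is generated by a positive rational integer (namely $N_{K/\mathbb{Q}}(\mathfrak{a})$), hence is narrowly principal, so $\sigma$ acts by inversion on $\Cl(K)$ and on $\Cl^+(K)$; in particular $\Cl^+(K)^G=\Cl^+(K)[2]$. Second, every Ostrowski ideal $\Pi_q(K)$ is either of the form $p\mathcal{O}_K$ (with $p>0$, hence narrowly principal) or a ramified prime $\mathfrak{p}_i$; therefore $\Po^+(K)$ is generated by the classes $[\,\mathfrak{p}_i\,]^+$, each of order dividing $2$, and $\Po(K)$ is the image of $\Po^+(K)$ under $\Cl^+(K)\twoheadrightarrow\Cl(K)$.

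Step 1 is to prove $\Po^+(K)=\Cl^+(K)[2]$. Taking $G$-cohomology of $1\to P^+(K)\to I(K)\to\Cl^+(K)\to1$, and using that $I(K)$ is a permutation $G$-module (so $H^1(G,I(K))=0$), one gets $\Cl^+(K)^G/\Po^+(K)\cong H^1(G,P^+(K))$, and it remains to show this vanishes. If $K$ is real, I would use $1\to U_K^+\to K^{\times}_{\gg 0}\to P^+(K)\to1$ (totally positive units and elements) together with a refinement of Hilbert~90 giving $H^1(G,K^{\times}_{\gg 0})=0$: a totally positive $x$ with $x\sigma(x)=1$ is $\sigma(z)/z$ for some $z\in K^{\times}$, and the positivity forces $z$ to have constant sign, so $z$ may be made totally positive after multiplying by a rational unit. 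This yields $H^1(G,P^+(K))\hookrightarrow H^2(G,U_K^+)$, and $U_K^+$ is infinite cyclic with $\sigma$ acting by inversion, so $H^2(G,U_K^+)=\widehat{H}^0(G,U_K^+)=0$. If $K$ is imaginary then $\Cl^+(K)=\Cl(K)$ and $U_K=\mu_K$; here $1\to U_K\to K^{\times}\to P(K)\to1$ and Hilbert~90 give $H^1(G,P(K))=\Ker\bigl(H^2(G,U_K)\to H^2(G,K^{\times})\bigr)$, where $H^2(G,U_K)=\widehat{H}^0(G,U_K)=\{\pm1\}/N_{K/\mathbb{Q}}(\mu_K)\cong\mathbb{Z}/2$ maps its generator to the class of $-1$ in $\mathbb{Q}^{\times}/N_{K/\mathbb{Q}}(K^{\times})$; this class is nonzero since $-1$ is not even a norm from $K$ completed at the archimedean place ($\mathbb{C}$), so again $H^1(G,P(K))=0$.

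Step 2 computes $\#\Cl^+(K)^G$ via Chevalley's ambiguous class number formula for $K/\mathbb{Q}$. For $K$ imaginary the usual formula gives $\#\Cl(K)^G=\dfrac{h(\mathbb{Q})\prod_v e_v}{[K:\mathbb{Q}]\,[\,U_{\mathbb{Q}}:U_{\mathbb{Q}}\cap N_{K/\mathbb{Q}}(\mathbb{A}_K^{\times})\,]}=\dfrac{2^{s_K}\cdot 2}{2\cdot 2}=2^{s_K-1}$, the extra factors of $2$ coming from the ramified archimedean place and from $-1$ not being a local norm there. For $K$ real I would apply the narrow (ray-class) version with modulus the real place of $\mathbb{Q}$: since $h^+(\mathbb{Q})=1$, the only totally positive rational unit is $1$, the archimedean place is unramified, and exactly the $s_K$ finite primes ramify, one gets $\#\Cl^+(K)^G=2^{s_K}/2=2^{s_K-1}$. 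With Step~1 this gives $\#\Po^+(K)=2^{s_K-1}$ in all cases (equivalently, one may quote classical genus theory: $\Po^+(K)\cong\Gal(K^+_{\mathrm{gen}}/K)$ with $[K^+_{\mathrm{gen}}:K]=2^{s_K-1}$). Finally, in Step~3, the kernel of $\Cl^+(K)\to\Cl(K)$ is $P(K)/P^+(K)$: it is trivial exactly when the sign map $U_K\to\{\pm1\}^{r_1}$ is onto, i.e. when $K$ is imaginary or the fundamental unit has norm $-1$, and has order $2$ when $K$ is real with fundamental unit of norm $+1$. In the latter case (writing $K=\mathbb{Q}(\sqrt m)$, $m>0$ squarefree) the nontrivial element of this kernel is $[\,(\sqrt m)\,]^+$, the narrow class of the ambiguous principal ideal $(\sqrt m)$; it lies in $\Po^+(K)$ and is nontrivial because, all units then having a constant sign vector, no unit multiple of $\sqrt m$ is totally positive. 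Hence $\Po(K)\cong\Po^+(K)$ in the first two cases and $\Po(K)\cong\Po^+(K)/\langle[\,(\sqrt m)\,]^+\rangle$ in the last, giving $\#\Po(K)=2^{s_K-1}$ and $2^{s_K-2}$ respectively.

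The main obstacle is Step~1: proving the vanishing of $H^1(G,P^+(K))$ cleanly, which needs the refined Hilbert~90 statement $H^1(G,K^{\times}_{\gg 0})=0$ and, more importantly, the discipline of phrasing everything in narrow terms. The temptation is to work with $\Cl(K)^G$ directly, but then Step~2 acquires the genuinely delicate index $[\,U_{\mathbb{Q}}:U_{\mathbb{Q}}\cap N_{K/\mathbb{Q}}(\mathbb{A}_K^{\times})\,]$ --- equivalently the condition ``$-1\in N_{K/\mathbb{Q}}(K^{\times})$'', which is \emph{not} equivalent to the fundamental unit having norm $-1$; passing to the narrow class group circumvents this, because the only totally positive rational unit is $1$, so that subtle index disappears and the archimedean place enters only through the harmless failure of $-1$ to be a norm from $\mathbb{C}$.
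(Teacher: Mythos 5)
Your argument is correct, but it takes a genuinely different route from the paper, which offers no proof at all for this statement and simply cites Hilbert's Satz 106. What you have written out is essentially the classical genus-theoretic argument behind Hilbert's theorem: pass to the narrow class group so that $\sigma$ acts by inversion, identify $\Po^+(K)$ with the full group of ambiguous narrow classes by killing $H^1(G,P^+(K))$ (your refined Hilbert 90 for totally positive elements, and the observation that $-1$ is not an archimedean norm, are exactly the right ingredients), count $\#\Cl^+(K)[2]=2^{s_K-1}$ by genus theory, and then measure the loss under $\Cl^+(K)\twoheadrightarrow\Cl(K)$ via the sign behaviour of the fundamental unit, with $[(\sqrt m)]^+$ as the explicit ambiguous narrow class that dies. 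All of these steps check out, including the subtle point you flag that ``$-1\in N_{K/\mathbb{Q}}(K^\times)$'' is not the same as the fundamental unit having norm $-1$. By contrast, the derivation that fits the paper's own toolkit is much shorter: Zantema's exact sequence \eqref{equation, Zantema's exact sequence}, stated immediately after this proposition as its generalization, gives $\#\Po(K)=2^{s_K}/\#H^1(G,U_K)$ directly, and $\#H^1(G,U_K)$ is computed from the Herbrand quotient $Q(G,U_K)=\#\hat{H}^0(G,U_K)/\#H^1(G,U_K)=2^{r}/2$ (with $r$ the number of ramified infinite places) together with $\hat{H}^0(G,U_K)=\{\pm1\}/N_{K/\mathbb{Q}}(U_K)$, whose order is $1$ exactly when $K$ is real with a fundamental unit of norm $-1$; this is precisely the computation the paper carries out in the dihedral setting in the proof of Theorem \eqref{theorem, maximum number of ramification in Polya dihedral extension}. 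Your approach buys a self-contained, purely classical proof that also identifies $\Po(K)$ structurally (as a quotient of the $2$-torsion of the narrow class group, i.e.\ the genus group); the Zantema/Herbrand route buys brevity and generalizes to the higher-degree Galois extensions the paper actually studies.
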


The above theorem of Hilbert has been generalized by Zantema:

\begin{proposition}  \cite[ Section 3, page 9]{zantema} \label{proposition, Zantema's exact sequence}
Let $K/\mathbb{Q}$ be a Galois extension with Galois group $G$. Denote the ramification index of a prime $p$ in $K$ by $e_p$ .Then the following sequence is exact:
\begin{equation} \label{equation, Zantema's exact sequence}
\{0\} \longrightarrow H^1(G,U_K) \longrightarrow \bigoplus _{\text{p prime}} \mathbb{Z} / e_p \mathbb{Z} \longrightarrow \Po(K) \longrightarrow \{0\}.
\end{equation}
\end{proposition}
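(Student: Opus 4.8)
The plan is to realize the three terms of \eqref{equation, Zantema's exact sequence} as sub-quotients of the group $I(K)^{G}$ of ambiguous ideals, and to extract the sequence from the tautological short exact sequence of $G$-modules
\begin{equation*}
1 \longrightarrow U_K \longrightarrow K^{\times} \longrightarrow P(K) \longrightarrow 1,
\end{equation*}
in which the second map sends $\alpha$ to the principal ideal $\alpha\mathcal{O}_K$. First I would record the two faces of $\Po(K)$: as noted above, for $K/\mathbb{Q}$ Galois the Ostrowski ideals freely generate $I(K)^{G}$, the relevant one at a rational prime $p$ being $\Pi_p(K):=\prod_{\mathfrak{P}\mid p}\mathfrak{P}$, so that $I(K)^{G}=\bigoplus_{p}\mathbb{Z}\cdot\Pi_p(K)$ is free abelian on the $\Pi_p(K)$. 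Since $\Po(K)$ is generated by the classes $[\Pi_p(K)]$, it is exactly the image of $I(K)^{G}\hookrightarrow I(K)\twoheadrightarrow\Cl(K)$; as the kernel of this composite is $I(K)^{G}\cap P(K)$, and a principal ideal is $G$-fixed precisely when it is ambiguous, this gives $\Po(K)\cong I(K)^{G}/P(K)^{G}$.

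Next I would feed the displayed sequence into the long exact $G$-cohomology sequence. Using $(K^{\times})^{G}=\mathbb{Q}^{\times}$ and Hilbert's Theorem $90$ in the form $H^{1}(G,K^{\times})=0$, the connecting homomorphism $\delta\colon P(K)^{G}\to H^{1}(G,U_K)$ is surjective, and its kernel is the image of $\mathbb{Q}^{\times}$ in $P(K)$. Writing $I_{0}$ for that image, this yields $H^{1}(G,U_K)\cong P(K)^{G}/I_{0}$; concretely $\delta$ sends a principal ambiguous ideal $\alpha\mathcal{O}_K$ to the class of the $U_K$-valued cocycle $\sigma\mapsto\sigma(\alpha)/\alpha$.

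Then I would identify $I_{0}$ inside $I(K)^{G}$. For $x\in\mathbb{Q}^{\times}$ one has $x\mathcal{O}_K=\prod_{p}(p\mathcal{O}_K)^{v_p(x)}$, and since $K/\mathbb{Q}$ is Galois $p\mathcal{O}_K=\Pi_p(K)^{e_p}$; hence $I_{0}=\bigoplus_{p}e_p\mathbb{Z}\cdot\Pi_p(K)$, and by the free structure of $I(K)^{G}$ we get $I(K)^{G}/I_{0}\cong\bigoplus_{p}\mathbb{Z}/e_p\mathbb{Z}$, a finite group since $e_p=1$ for all but finitely many $p$. Finally, the chain of inclusions $I_{0}\subseteq P(K)^{G}\subseteq I(K)^{G}$ produces the short exact sequence of quotients modulo $I_{0}$,
\begin{equation*}
0\longrightarrow P(K)^{G}/I_{0}\longrightarrow I(K)^{G}/I_{0}\longrightarrow I(K)^{G}/P(K)^{G}\longrightarrow 0,
\end{equation*}
which, by the identifications above, is precisely \eqref{equation, Zantema's exact sequence}.

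The bulk of this is formal diagram chasing; the points that need genuine care are the identification $I(K)^{G}\cap P(K)=P(K)^{G}$ together with the fact that $I(K)^{G}/P(K)^{G}$ embeds in $\Cl(K)$ with image $\Po(K)$ — which is exactly where the freeness of $I(K)^{G}$ on the Ostrowski ideals enters — and the exponent bookkeeping $p\mathcal{O}_K=\Pi_p(K)^{e_p}$ that makes $I(K)^{G}/I_{0}$ split as $\bigoplus_{p}\mathbb{Z}/e_p\mathbb{Z}$ rather than merely surject onto it. Invoking Hilbert $90$ for the (possibly non-abelian) group $G=\Gal(K/\mathbb{Q})$ is the other essential input, and keeping track that the direct sums are effectively finite removes any convergence worry.
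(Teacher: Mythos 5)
Your proof is correct, and it uses exactly the ingredients the paper relies on: the identification $\Po(K)\cong I(K)^{G}/P(K)^{G}$ via the freeness of $I(K)^{G}$ on the Ostrowski ideals, Hilbert 90 applied to $0\to U_K\to K^{\times}\to P(K)\to 0$ to get $H^{1}(G,U_K)\cong P(K)^{G}/j(I(\mathbb{Q}))$, and the quotient $I(K)^{G}/j(I(\mathbb{Q}))\cong\bigoplus_p\mathbb{Z}/e_p\mathbb{Z}$. The paper itself only cites Zantema for this proposition, but its proof of the relative generalization (Theorem \ref{theorem, generalization of the Zantema's exact sequence}) runs the same argument through a snake-lemma diagram, which over the base $\mathbb{Q}$ (where $\Cl(\mathbb{Q})=0$) degenerates to precisely your chain $I_0\subseteq P(K)^{G}\subseteq I(K)^{G}$.
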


\begin{remark} \label{remark, if (h(K),[K:Q])=1, then K is Polya but not conversely}
By Zantema's exact sequence \eqref{equation, Zantema's exact sequence}, for a Galois number field $K$,  $\#\Po(K)$  divides $\prod _{p \text{ prime}} e_p$, so if $gcd([K:\mathbb{Q}],h(K))=1$ then $K$ is P\'olya, but not conversely. For instance, as we will see, in Example \eqref{example, first example of $D_7$-extension of Q} there is a P\'olya $D_7$-extension of $\mathbb{Q}$ with class number 7.

%For instance, as we will see in Example \eqref{example, first example of $D_7$-extension of Q}, the splitting field of 
%\begin{equation*}
%f(X)=X^7-7X^6-7X^5-7X^4-1.
%\end{equation*}
% (over $\mathbb{Q}$) is a P\'olya $D_7$-extension of $\mathbb{Q}$ with class number 7. 
%We generalize this statement, see Remark \eqref{remark, if M/N is Galois and gcd(h(M),[M:N])=1, then Po(M/N)=j(Cl(N))}.
\end{remark}

%We conclude this section with the classification of P\'olya quadratic fields, obtaining from Hilbert's Theorem 106 \cite{Hilbert's book}:

The reader is refered to \cite{Cahen-Chabert's book,Chabert 2001,Chabert I,Chabert II, HR1,HR2,Leriche 2011,Leriche 2013,Leriche 2014,Maarefparvar's Thesis,Maarefparvar-Rajaei,zantema}, for some results on P\'olya fields and P\'olya groups.

\section{Relative P\'olya group} \label{section, Generalization of Zantema's exact sequence}
Using Brumer-Rosen's method in \cite{Brumer-Rosen}, we can generalize  Zantema's result in \cite[Section 3]{zantema} to finite Galois extensions of number fields and find a generalization of exact sequence  \eqref{equation, Zantema's exact sequence}.
First we recall the definition of the \textit{relative P\'olya group} \cite{Maarefparvar's Thesis} (also independently defined by Chabert \cite{Chabert I}):

\begin{definition} \cite[Definition 4.1]{Maarefparvar's Thesis}
Let $L/K$ be a finite extension of number fields. The \textit{relative P\'olya group} of $L$ over $K$, is the subgroup of $\Cl(L)$ generated by the classes of the ideals  $\Pi_{\mathfrak{P}^f}(L/K)$, where $\mathfrak{P}$ is a prime ideal of $K$, $f$ is a positive integer and $\Pi_{\mathfrak{P}^f}(L/K)$ is defined as follows:
\begin{align*}
\Pi_{\mathfrak{P}^f}(L/K)=\prod_{\substack{\mathfrak{M}\in Max(\mathcal{O}_L)\\ N_{L/ K}(\mathfrak{M})=\mathfrak{P}^f}} \mathfrak{M}.
\end{align*}
We denote the relative P\'olya group of $L$ over $K$ by $\Po(L/K)$. In particular, $\Po(L/\mathbb{Q})=\Po(L)$ and $\Po(L/L)=\Cl(L)$.
\end{definition}
Now let $L/K$ be a Galois extension with Galois group $G$. It is easily seen that the set of all  ideals $\Pi_{\mathfrak{P}^f}(L/K)$ is a set of free generators for the ambiguous ideals $I(L)^G$, see \cite[Proof of Proposition 2.2]{Brumer-Rosen}.
On the other hand $P(L)^G=I(L)^G \cap P(L)$. Hence in this case $\Po(L/K)=I(L)^G/P(L)^G$.
As in \cite[Section 3]{zantema}, we define:
\begin{align} \label{equation, the map psi similar to Zantema's definition}
\psi : I(L)^G \rightarrow \bigoplus _{\mathfrak{P} \text{ is a prime of K}} \mathbb{Z} / e_{\mathfrak{P}} \mathbb{Z} \nonumber  \\
(\psi(\prod_{i=1}^m \Pi_{\mathfrak{P}^{f_i}}(L/K)^{t_i}))_{\mathfrak{P_i}}:=t_i (\mathrm{mod}\, e_{\mathfrak{P_i}}),
\end{align}
where $m$ is a positive integer, $t_i$'s are integers and $e_\mathfrak{P}$ denotes the ramification index of $\mathfrak{P}$ in $L/K$. It is clear that $\psi$ is a group epimorphism and one can show that $\Ker(\psi)=I(K)$. Hence we get the following exact sequence:
\begin{equation} \label{equation, exact sequence is obtained by the defined map psi}
\{0 \} \longrightarrow I(K) \longrightarrow I(L)^G \longrightarrow \bigoplus _{\mathfrak{P} \text{ is prime of K}} \mathbb{Z} / e_{\mathfrak{P}} \mathbb{Z} \longrightarrow \{0 \}.
\end{equation}
Following Brumer-Rosen \cite{Brumer-Rosen}, Consider the following exact sequence:
\begin{equation*}
\{0 \} \longrightarrow U_L \longrightarrow L^* \longrightarrow P(L) \longrightarrow \{0 \}.
\end{equation*}
Taking Cohomology and using Hilbert's theorem 90 \cite{Hilbert's book}, we get the exact sequence 
\begin{equation*}
\{0 \} \longrightarrow U_K \longrightarrow K^* \longrightarrow P(L)^G \longrightarrow H^1(G,U_L) \longrightarrow   \{0 \}
\end{equation*}

Equivalently, the following sequence is exact:
\begin{equation} \label{equation, exact sequence similar to Brumer-rosen's result}
\{0 \} \longrightarrow P(K) \longrightarrow P(L)^G \longrightarrow H^1(G,U_L) \longrightarrow   \{0 \}.
\end{equation}

Now we can generalize Zantema's result in \cite[Section 3]{zantema}:
\begin{theorem} \label{theorem, generalization of the Zantema's exact sequence}
Let $L/K$ be a finite Galois extension of number fields with Galois group $G$. Then the following sequence is exact:
{\small
\begin{equation} \label{equation, main exact sequence}
\{0 \} \rightarrow \Ker({\epsilon}_{L/K}) \rightarrow H^1(G,U_L) \rightarrow \bigoplus _{\mathfrak{P} \text{ is prime of K}} \mathbb{Z} / e_{\mathfrak{P}} \mathbb{Z} \rightarrow \frac{\Po(L/K)}{{\epsilon}_{L/K}(\Cl(K))} \rightarrow \{0 \}.
\end{equation}}

\end{theorem}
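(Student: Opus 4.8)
The plan is to obtain the exact sequence \eqref{equation, main exact sequence} by applying the snake lemma to a morphism between the short exact sequence $\{0\} \to P(K) \to I(K) \to \Cl(K) \to \{0\}$ and its analogue $\{0\} \to P(L)^G \to I(L)^G \to \Po(L/K) \to \{0\}$ for the ambiguous ideals of $L$, exactly in the spirit of Zantema's argument in \cite[Section 3]{zantema} and Brumer--Rosen \cite{Brumer-Rosen}.

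First I would collect the inputs already at hand. Recall from the paragraph preceding the theorem that $\Po(L/K) = I(L)^G/P(L)^G$ with $P(L)^G = I(L)^G \cap P(L)$; hence $\Po(L/K)$ embeds into $\Cl(L)$, and for $\mathfrak{a} \in I(K)$ the ambiguous ideal $\mathfrak{a}\mathcal{O}_L$ represents the class $\epsilon_{L/K}([\mathfrak{a}])$, which therefore lies in $\Po(L/K)$. Thus $\epsilon_{L/K}(\Cl(K)) \subseteq \Po(L/K)$ and $\epsilon_{L/K}$ may be regarded as a homomorphism $\Cl(K) \to \Po(L/K)$. Passing to cokernels of the inclusions in \eqref{equation, exact sequence is obtained by the defined map psi} and in \eqref{equation, exact sequence similar to Brumer-rosen's result} identifies $I(L)^G/j_{L/K}(I(K))$ with $\bigoplus_{\mathfrak{P}} \mathbb{Z}/e_{\mathfrak{P}}\mathbb{Z}$ (via $\psi$) and $P(L)^G/j_{L/K}(P(K))$ with $H^1(G,U_L)$ (via the Brumer--Rosen map).

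Next I would form the commutative diagram of short exact sequences
\[
\begin{array}{ccccccccc}
\{0\} & \to & P(K) & \to & I(K) & \to & \Cl(K) & \to & \{0\}\\
 & & \downarrow & & \downarrow & & \downarrow\,\epsilon_{L/K} & & \\
\{0\} & \to & P(L)^G & \to & I(L)^G & \to & \Po(L/K) & \to & \{0\}
\end{array}
\]
in which the first two vertical maps are the restrictions of $j_{L/K}$. Both squares commute straight from the definitions (on principal ideals $\alpha\mathcal{O}_K \mapsto \alpha\mathcal{O}_L$, and on classes $[\mathfrak{a}] \mapsto [\mathfrak{a}\mathcal{O}_L]$ along either path), and the first two vertical maps are injective because $j_{L/K}$ is. The snake lemma then produces a six-term exact sequence whose first three terms vanish, leaving
\[
\{0\} \to \Ker(\epsilon_{L/K}) \to \Coker(P(K) \to P(L)^G) \to \Coker(I(K) \to I(L)^G) \to \Coker(\epsilon_{L/K}) \to \{0\}.
\]
Substituting the three cokernels computed above, namely $H^1(G,U_L)$, $\bigoplus_{\mathfrak{P}} \mathbb{Z}/e_{\mathfrak{P}}\mathbb{Z}$, and $\Po(L/K)/\epsilon_{L/K}(\Cl(K))$, recovers \eqref{equation, main exact sequence}.

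The step I expect to require the most care — and which carries the actual content — is verifying that the connecting homomorphism $\Ker(\epsilon_{L/K}) \to H^1(G,U_L)$ and the induced middle map $H^1(G,U_L) \to \bigoplus_{\mathfrak{P}} \mathbb{Z}/e_{\mathfrak{P}}\mathbb{Z}$ coming out of the snake lemma agree with the maps furnished by the Brumer--Rosen cohomology construction and by $\psi$, i.e.\ that the two cokernel identifications of the previous paragraph are natural with respect to the vertical inclusions $P(K) \hookrightarrow P(L)^G$ and $I(K) \hookrightarrow I(L)^G$. This amounts to a routine diagram chase; by contrast the commutativity of the two squares, the injectivity of the vertical maps, and the inclusion $\epsilon_{L/K}(\Cl(K)) \subseteq \Po(L/K)$ are all immediate from the definitions.
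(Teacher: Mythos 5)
Your proposal is correct and is essentially the paper's own proof: the paper likewise applies the snake lemma to the map from $\{0\}\to P(K)\to I(K)\to \Cl(K)\to\{0\}$ to $\{0\}\to P(L)^G\to I(L)^G\to I(L)^G/P(L)^G\to\{0\}$, using the exact sequences \eqref{equation, exact sequence is obtained by the defined map psi} and \eqref{equation, exact sequence similar to Brumer-rosen's result} to identify the cokernels of the first two vertical maps with $\bigoplus_{\mathfrak{P}}\mathbb{Z}/e_{\mathfrak{P}}\mathbb{Z}$ and $H^1(G,U_L)$. The compatibility check you flag is the same routine verification implicit in the paper's commutative diagram.
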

\begin{proof}
We have the following commutative diagram with exact rows of abelian groups:
{\small
\begin{displaymath}
\xymatrix{
& \{0 \} \ar[d]  &  \{0 \} \ar[d] \\
\{0 \} \ar[r] & P(K) \ar[r] \ar[d] & I(K) \ar[r] \ar[d] & \Cl(K)  \ar[r] \ar[d]_{{\epsilon}_{L/K}} & \{0\} \\
\{0\} \ar[r] &P(L)^G \ar[r] \ar[d] & I(L)^G \ar[r] \ar[d]_{\psi} & I(L)^G/P(L)^G \ar[r] & \{0\} \\
& H^1(G,U_L) \ar[d] & \bigoplus _{\mathfrak{P} \mid \disc(L/K)} \mathbb{Z} / e_{\mathfrak{P}} \mathbb{Z} \ar[d] \\
& \{0\} & \{0 \}}
\end{displaymath}}

The first column is the exact sequence \eqref{equation, exact sequence similar to Brumer-rosen's result}, and the second column is the exact sequence \eqref{equation, exact sequence is obtained by the defined map psi}. Hence by the snake lemma, we find an exact sequence as follows:
\begin{equation*}
\{0 \} \rightarrow \Ker({\epsilon}_{L/K}) \rightarrow H^1(G,U_L) \rightarrow \bigoplus _{\mathfrak{P} \mid \disc(L/K)} \mathbb{Z} / e_{\mathfrak{P}} \mathbb{Z}   \rightarrow \Coker({\epsilon}_{L/K}) \rightarrow   \{0 \}.
\end{equation*}
Since $L/K$ is a Galois extension, $\Po(L/K)=I(L)^G/P(L)^G$ and the statement is proved.
\end{proof}

\begin{remark} \label{remark, if M/N is Galois and gcd(h(M),[M:N])=1, then Po(M/N)=j(Cl(N))}
By exact sequence \eqref{equation, main exact sequence},  order of the quotient group $\frac{\Po(L/K)}{{\epsilon}_{L/K}(\Cl(K))}$ divides a power of $[L:K]$. On the other hand, $\Po(L/K)$ is a subgroup of $\Cl(L)$. Hence if $\gcd(h(L),[L:K])=1$, then $\Po(L/K)={\epsilon}_{L/K}(\Cl(K))$. 
%%Note that this statement is a generalization of Remark \eqref{remark, if (h(K),[K:Q])=1, then K is Polya but not conversely}.
\end{remark}

\begin{corollary} \label{corollary, if h(N) and order of H^1 are relatively prime then Cl(N) is embedded  in Po(M/N)}
Let $L/K$ be a finite Galois extension of number fields with Galois group $G$. If $h(K)$ is relatively prime to $[L:K]$, then $\Cl(K)$ is embedded in $\Po(L/K)$. In particular, $h(K)$ divides $h(L)$. Moreover, we get a generalization of Zantema's exact sequence \eqref{equation, Zantema's exact sequence} as follows:
\begin{equation*}
\{0\} \rightarrow H^1(G,U_L)  \rightarrow \bigoplus_{\mathfrak{P} | \disc(L/K)} \mathbb{Z}/e_{\mathfrak{P}} \mathbb{Z} \rightarrow \Po(L/K)/\Cl(K) \rightarrow \{ 0\}.
\end{equation*}
\end{corollary}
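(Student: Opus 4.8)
The plan is to read off everything from the main exact sequence \eqref{equation, main exact sequence} of Theorem~\ref{theorem, generalization of the Zantema's exact sequence}, using only two elementary inputs: that $\Ker(\epsilon_{L/K})$ is a subgroup of $\Cl(K)$, hence of order dividing $h(K)$; and that $H^1(G,U_L)$ is annihilated by $|G| = [L:K]$, i.e.\ the standard fact that $H^n(G,M)$ is killed by $|G|$ for $n \geq 1$ (via the restriction--corestriction, equivalently norm, argument).

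First I would extract from \eqref{equation, main exact sequence} the injection $\Ker(\epsilon_{L/K}) \hookrightarrow H^1(G,U_L)$. Since the target is annihilated by $[L:K]$, so is $\Ker(\epsilon_{L/K})$; in particular its order divides a power of $[L:K]$. On the other hand $\Ker(\epsilon_{L/K})$ is a subgroup of $\Cl(K)$, so its order divides $h(K)$. As $\gcd(h(K),[L:K]) = 1$, this forces $\Ker(\epsilon_{L/K}) = \{0\}$, i.e.\ $\epsilon_{L/K} \colon \Cl(K) \to \Cl(L)$ is injective. Composing with the inclusion $\epsilon_{L/K}(\Cl(K)) \subseteq \Po(L/K)$ (valid because, for $L/K$ Galois, $\Po(L/K) = I(L)^G/P(L)^G$ contains the image of $\Cl(K) = I(K)/P(K)$ under the transfer, as recorded just before the Theorem) yields the desired embedding $\Cl(K) \hookrightarrow \Po(L/K)$. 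Since $\Po(L/K)$ is a subgroup of $\Cl(L)$, comparing orders gives $h(K) \mid h(L)$.

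For the displayed exact sequence: substituting $\Ker(\epsilon_{L/K}) = \{0\}$ into \eqref{equation, main exact sequence} collapses its leading term, and identifying $\epsilon_{L/K}(\Cl(K))$ with $\Cl(K)$ via the now-injective $\epsilon_{L/K}$ rewrites the final term as $\Po(L/K)/\Cl(K)$, producing exactly the claimed (three-term) exact sequence.

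There is essentially no hard step: everything is a formal manipulation of the exact sequence of Theorem~\ref{theorem, generalization of the Zantema's exact sequence} once one invokes the annihilation of $H^1(G,U_L)$ by $[L:K]$. The only point that merits any care is making the identification in the quotient $\Po(L/K)/\epsilon_{L/K}(\Cl(K))$ precise, i.e.\ checking that the embedding $\Cl(K) \hookrightarrow \Po(L/K)$ produced above is literally $\epsilon_{L/K}$, so that the quotient appearing in \eqref{equation, main exact sequence} coincides with the one in the statement.
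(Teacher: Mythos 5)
Your proof is correct, but it establishes the key point --- injectivity of $\epsilon_{L/K}$ --- by a different mechanism than the paper. You feed the coprimality hypothesis through the exact sequence \eqref{equation, main exact sequence}: $\Ker(\epsilon_{L/K})$ injects into $H^1(G,U_L)$, which is annihilated by $[L:K]$, while as a subgroup of $\Cl(K)$ its order divides $h(K)$, so coprimality kills it. The paper instead argues directly on the class group: the composite $\mathcal{N}_{L/K}\circ\epsilon_{L/K}$ is the map $\bar{\mathfrak{a}}\mapsto\bar{\mathfrak{a}}^{[L:K]}$ on $\Cl(K)$, which is injective when $\gcd(h(K),[L:K])=1$, whence $\epsilon_{L/K}$ is injective. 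The two arguments are cousins (both are restriction--corestriction in spirit), but the paper's version has a concrete advantage that it exploits in the remark immediately following the corollary: it makes no use of the Galois hypothesis or of the cohomological exact sequence, so it shows $\epsilon_{L/K}$ is injective for arbitrary finite extensions with $\gcd(h(K),[L:K])=1$. Your route, by contrast, is locked to the Galois setting since it leans on Theorem \eqref{theorem, generalization of the Zantema's exact sequence}; within that setting it is perfectly sound, and the remainder of your argument (the containment $\epsilon_{L/K}(\Cl(K))\subseteq\Po(L/K)$, the divisibility $h(K)\mid h(L)$, and the collapse of the four-term sequence to the displayed three-term one) matches the paper's.
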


\begin{proof}
Since $gcd(h(K),[L:K])=1$, $\mathcal{N}_{L/K} \, o \, {\epsilon}_{L/K}:\bar{\mathfrak{a}} \in \Cl(K) \mapsto \bar{\mathfrak{a}}^{[L:K]} \in \Cl(K)$ is injective. Hence ${\epsilon}_{L/K}$ is injective and the statement follows from Theorem \eqref{theorem, generalization of the Zantema's exact sequence}.
\end{proof}

\begin{remark} 
By the above proof, if $gcd(h(K),[L:K])=1$, then ${\epsilon}_{L/K}$ is injective, even for non-Galois extensions $L/K$. Indeed this is a particular case of Leriche's result in \cite[Proposition 6.3]{Leriche 2013}.
\end{remark}

\begin{remark}
If we are just interested in $\#\Po(L/K)$ of a Galois extensions $L/K$ (without the hypothesis that $h(K)$ and  $[L:K]$ are coprime), then interchanging the horizontal and vertical rows in the diagram in proof of Theorem \eqref{theorem, generalization of the Zantema's exact sequence}, gives 
\begin{equation*}
\#\Po(L/K)=\frac{h(K). \prod_{\mathfrak{P} | disc(L/K)} e_{\mathfrak{P}}}{\#H^1(G,U_L)}.
\end{equation*}
This has been done in \cite[Proposition 4.4]{Chabert I}.
\end{remark}

\begin{corollary} \label{corollary, if M/N is Galois and every ideal class of N extended to M is principal then we get a generalization of Zantema's exact sequence}
Let $L/K$ be a finite Galois extension of number fields with Galois group $G$. If every ideal class of $K$ extended to $L$ is principal, then 
\begin{equation*}
\{0 \} \rightarrow \Cl(K) \rightarrow H^1(G,U_L) \rightarrow \bigoplus _{\mathfrak{P} \text{ is prime of K}} \mathbb{Z} / e_{\mathfrak{P}} \mathbb{Z}
 \rightarrow \Po(L/K) \rightarrow   \{0 \}
\end{equation*}
is exact.
\end{corollary}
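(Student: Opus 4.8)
The plan is to derive this as an immediate specialization of Theorem \ref{theorem, generalization of the Zantema's exact sequence}. The hypothesis that every ideal class of $K$ becomes principal in $L$ says precisely that the transfer map ${\epsilon}_{L/K}:\Cl(K)\to\Cl(L)$ is the zero map, i.e. ${\epsilon}_{L/K}(\Cl(K))=\{0\}$. First I would record the two consequences of this: on the one hand $\Ker({\epsilon}_{L/K})=\Cl(K)$, so the left-hand term of the exact sequence \eqref{equation, main exact sequence} becomes $\Cl(K)$; on the other hand the denominator ${\epsilon}_{L/K}(\Cl(K))$ in the rightmost term is trivial, so the quotient $\Po(L/K)/{\epsilon}_{L/K}(\Cl(K))$ is just $\Po(L/K)$ itself.

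Substituting these two identifications into \eqref{equation, main exact sequence} yields exactly the claimed four-term exact sequence
\begin{equation*}
\{0\}\rightarrow \Cl(K)\rightarrow H^1(G,U_L)\rightarrow \bigoplus_{\mathfrak{P}\text{ is prime of }K}\mathbb{Z}/e_{\mathfrak{P}}\mathbb{Z}\rightarrow \Po(L/K)\rightarrow\{0\}.
\end{equation*}
Since $L/K$ is Galois we already know $\Po(L/K)=I(L)^G/P(L)^G$, so no further interpretation of the last term is needed.

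There is essentially no obstacle here: the content is entirely contained in Theorem \ref{theorem, generalization of the Zantema's exact sequence}, and the only thing to check is the elementary translation between ``every extended ideal class is principal'' and ``${\epsilon}_{L/K}$ is the zero homomorphism,'' which is immediate from the definition of ${\epsilon}_{L/K}$ as the map induced by $\mathfrak{a}\mapsto\mathfrak{a}\mathcal{O}_L$. One could optionally remark that this hypothesis holds in particular when $L$ contains the Hilbert class field $H(K)$ (by the principal ideal theorem), which is presumably the intended application; but for the proof of the corollary itself a single sentence suffices.
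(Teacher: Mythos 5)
Your proposal is correct and is essentially identical to the paper's own proof: both observe that the hypothesis makes ${\epsilon}_{L/K}$ the zero map, so that $\Ker({\epsilon}_{L/K})=\Cl(K)$ and $\Po(L/K)/{\epsilon}_{L/K}(\Cl(K))=\Po(L/K)$, and then specialize Theorem \eqref{theorem, generalization of the Zantema's exact sequence}. Your version just spells out the two substitutions slightly more explicitly than the paper does.
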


\begin{proof}
In this case, $\Ker({\epsilon}_{L/K})=\Cl(K)$ and the statement follows from Theorem \eqref{theorem, generalization of the Zantema's exact sequence}. Note that this generalizes Zantema's exact sequence \eqref{equation, Zantema's exact sequence}.
\end{proof}

\begin{corollary} \label{corollary, if M/N is Galois and unramified then H^1 is embedded in Cl(N) and Po(M/N) isomorph. to j(Cl(N))}
Let $L/K$ be a finite Galois extension of number fields with Galois group $G$. If all finite places of $K$ are unramified in $L$, then $H^1(G,U_L)$ is embedded in $\Cl(K)$  and $\Po(L/K)\simeq {\epsilon}_{L/K}(\Cl(K))$.
\end{corollary}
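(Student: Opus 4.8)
The plan is to read off the conclusion directly from the main exact sequence \eqref{equation, main exact sequence} of Theorem \eqref{theorem, generalization of the Zantema's exact sequence}, the only input being that ``unramified at all finite places'' forces the middle term of that sequence to vanish. Concretely, first I would note that the hypothesis says $e_{\mathfrak{P}} = 1$ for every prime $\mathfrak{P}$ of $K$ (equivalently, $\disc(L/K) = \mathcal{O}_K$, so no prime of $K$ divides $\disc(L/K)$); hence each summand $\mathbb{Z}/e_{\mathfrak{P}}\mathbb{Z}$ is trivial and
\[
\bigoplus_{\mathfrak{P} \text{ prime of } K} \mathbb{Z}/e_{\mathfrak{P}}\mathbb{Z} = \{0\}.
\]

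Plugging this into \eqref{equation, main exact sequence}, the four-term exact sequence degenerates: the map $H^1(G,U_L) \to \bigoplus_{\mathfrak{P}} \mathbb{Z}/e_{\mathfrak{P}}\mathbb{Z}$ is the zero map into the zero group, so the inclusion $\Ker(\epsilon_{L/K}) \hookrightarrow H^1(G,U_L)$ is forced to be surjective, giving an isomorphism $H^1(G,U_L) \simeq \Ker(\epsilon_{L/K})$. Since $\Ker(\epsilon_{L/K})$ is by definition a subgroup of $\Cl(K)$, this exhibits $H^1(G,U_L)$ as embedded in $\Cl(K)$. Simultaneously, exactness at the last nonzero term yields $\Po(L/K)/\epsilon_{L/K}(\Cl(K)) = \{0\}$, i.e. $\Po(L/K) = \epsilon_{L/K}(\Cl(K))$ as subgroups of $\Cl(L)$, which is the asserted (in fact slightly stronger than) isomorphism.

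I do not expect any genuine obstacle here: the corollary is a direct specialization of Theorem \eqref{theorem, generalization of the Zantema's exact sequence}. The only point requiring a moment's care is the bookkeeping distinction between finite and infinite places — the hypothesis only controls the finite primes, and infinite places are allowed to ramify (e.g. a real $K$ becoming complex in $L$); but since the sum in \eqref{equation, main exact sequence} runs over finite primes $\mathfrak{P}$ of $K$, this causes no difficulty. One might also remark, as a sanity check, that this recovers and refines Zantema's picture for unramified extensions, and that for $L = H(K)$ one is exactly in the situation of an unramified Galois extension, so the statement feeds directly into the triviality results for the Hilbert class field announced in the abstract.
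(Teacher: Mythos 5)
Your proof is correct and follows exactly the paper's own argument: specialize the exact sequence \eqref{equation, main exact sequence} to the case where all $e_{\mathfrak{P}}=1$, so the middle sum vanishes, forcing $H^1(G,U_L)\simeq\Ker(\epsilon_{L/K})\subseteq\Cl(K)$ and $\Po(L/K)=\epsilon_{L/K}(\Cl(K))$. Your version is simply a more detailed write-up of the same one-line deduction.
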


\begin{proof}
If all finite places of $K$ are unramified in $L$, by exact sequence \eqref{equation, main exact sequence}, we have $H^1(G,U_L) \simeq \Ker({\epsilon}_{L/K})$ is a subgroup of $Cl(K)$ and the quotient group $\frac{\Po(L/K)}{{\epsilon}_{L/K}(\Cl(K))}$ would be trivial .
\end{proof}

%We recall that the \textit{Hilbert class field} $H(K)$ of $K$, is the maximal unramified abelian extension of $K$ \cite[Chapter 6, Section 4]{NChildress}. Using the above results, we have:

\begin{corollary} \label{corollary, if M/N is Galois and unramified and every ideal class of N extended to M is principal, then Cl(N) isomorphic to H^1 and Po(M/N) is trivia}
Let $L/K$ be a finite Galois extension of number fields with Galois group $G$. If every ideal class of $K$ extended to $L$ is principal and all finite places of $K$ are unramified in $L$, then $\Cl(K) \simeq H^1(G, U_L)$ and $\Po(L/K) = \{0\}$. In particular, $\Po(H(K)/K)$ is trivial, for the Hilbert class field $H(K)$ of $K$.
\end{corollary}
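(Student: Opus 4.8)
The plan is to apply Corollary 2.9 twice, combining its two conclusions with the hypotheses of the present statement. First I would observe that the two hypotheses — every ideal class of $K$ extended to $L$ is principal, and all finite places of $K$ are unramified in $L$ — are exactly the unions of the hypotheses of Corollary 2.7 and Corollary 2.9. Concretely, since every ideal class of $K$ becomes principal in $L$, we have $\Ker({\epsilon}_{L/K}) = \Cl(K)$; and since $L/K$ is unramified at all finite places, the direct sum $\bigoplus_{\mathfrak{P}}\mathbb{Z}/e_{\mathfrak{P}}\mathbb{Z}$ in the exact sequence \eqref{equation, main exact sequence} vanishes. Feeding both facts into \eqref{equation, main exact sequence} collapses it to the short exact sequence
\begin{equation*}
\{0\} \rightarrow \Cl(K) \rightarrow H^1(G,U_L) \rightarrow \{0\} \rightarrow \frac{\Po(L/K)}{{\epsilon}_{L/K}(\Cl(K))} \rightarrow \{0\},
\end{equation*}
which immediately gives $\Cl(K) \simeq H^1(G,U_L)$ and $\Po(L/K) = {\epsilon}_{L/K}(\Cl(K))$.

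Next I would note that ${\epsilon}_{L/K}(\Cl(K)) = \{0\}$: this is just a restatement of the first hypothesis, namely that every ideal class of $K$ extended to $L$ is trivial, i.e. ${\epsilon}_{L/K}$ is the zero map. Hence $\Po(L/K) = \{0\}$, completing the general part of the statement.

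For the application to the Hilbert class field, I would take $L = H(K)$. By definition $H(K)/K$ is an abelian (in particular Galois) extension, unramified at all places, finite and non-trivial in general. The principal ideal theorem (Furtwängler) states precisely that every ideal of $K$ becomes principal in $H(K)$, so the hypothesis "every ideal class of $K$ extended to $H(K)$ is principal" holds, and $H(K)/K$ is unramified everywhere by construction. Therefore the general statement applies and yields $\Cl(K) \simeq H^1(\Gal(H(K)/K), U_{H(K)})$ and $\Po(H(K)/K) = \{0\}$.

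I do not anticipate a genuine obstacle here: the corollary is a direct specialization of Corollary 2.9, and the only external input needed is the classical principal ideal theorem. The one point deserving a word of care is that one must invoke the principal ideal theorem in its full strength (capitulation of \emph{all} ideal classes, not merely the trivial one), and one should record that $H(K)/K$ is Galois so that Corollary 2.9 — stated for Galois extensions — genuinely applies.
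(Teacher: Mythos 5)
Your proposal is correct and follows essentially the same route as the paper: the paper's proof simply cites the two preceding corollaries, which amounts to exactly your step of feeding both hypotheses ($\Ker(\epsilon_{L/K})=\Cl(K)$ and the vanishing of $\bigoplus_{\mathfrak{P}}\mathbb{Z}/e_{\mathfrak{P}}\mathbb{Z}$) into the exact sequence \eqref{equation, main exact sequence}. Your explicit invocation of the principal ideal theorem for the Hilbert class field application is a detail the paper leaves implicit, and it is the right justification.
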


\begin{proof}
Immediately follows from Corollaries \eqref{corollary, if M/N is Galois and every ideal class of N extended to M is principal then we get a generalization of Zantema's exact sequence} and \eqref{corollary, if M/N is Galois and unramified then H^1 is embedded in Cl(N) and Po(M/N) isomorph. to j(Cl(N))}.
\end{proof}

Leriche \cite{Leriche 2014} proved that the Hilbert class field $H(K)$ of $K$ is a P\'olya field.
Using the following lemma (part $(ii)$, for $P=\mathbb{Q}$), we conclude that Corollary \eqref{corollary, if M/N is Galois and unramified and every ideal class of N extended to M is principal, then Cl(N) isomorphic to H^1 and Po(M/N) is trivia} is a generalization of this result \cite[Corollary 3.2]{Leriche 2014}:

\begin{lemma} \label{lemma, for P sub N sub M, if M/N is Galois are Galois, then Po(M/P) is contained in Po(M/N)}
Let $P \subseteq K \subseteq L$ be a tower of finite extensions of number fields. 

\begin{itemize}
\item[$(i)$]
If $K/P$ and $L/P$ are Galois extensions, then $\epsilon_{L/K}(\Po(K/P)) \subseteq \Po(L/P)$.
\item[$(ii)$]
If $L/K$ is a Galois extension, then $\Po(L/P) \subseteq \Po(L/K)$.
\end{itemize}

\end{lemma}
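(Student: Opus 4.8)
The plan is to reduce everything to the description, valid for the Galois layers, of a relative P\'olya group as a group of ambiguous ideal classes. Recall that for a Galois extension $L'/K'$ with group $G$ one has $\Po(L'/K')=I(L')^{G}/P(L')^{G}$, and $P(L')^{G}=I(L')^{G}\cap P(L')$; hence, viewed inside $\Cl(L')=I(L')/P(L')$, the group $\Po(L'/K')$ is exactly the image of the ambiguous ideals $I(L')^{G}$ under the natural surjection $I(L')\twoheadrightarrow\Cl(L')$. In both parts the statement will then come down to checking that a suitable family of ideals of $\mathcal{O}_L$ is stable under the appropriate Galois group, after which passing to classes is immediate.

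For part $(i)$, write $G_K=\Gal(K/P)$ and $G_L=\Gal(L/P)$. Every element of $\Po(K/P)$ has a representative $\mathfrak{a}\in I(K)^{G_K}$, and I would show $j_{L/K}(\mathfrak{a})=\mathfrak{a}\mathcal{O}_L\in I(L)^{G_L}$. Indeed, given $\sigma\in G_L$, the hypothesis that $K/P$ is Galois guarantees that $\sigma$ stabilises $K$ and that $\sigma|_{K}\in G_K$, so
\[
\sigma(\mathfrak{a}\mathcal{O}_L)=\sigma(\mathfrak{a})\,\sigma(\mathcal{O}_L)=(\sigma|_{K})(\mathfrak{a})\,\mathcal{O}_L=\mathfrak{a}\mathcal{O}_L .
\]
Thus $\epsilon_{L/K}$ carries the class of $\mathfrak{a}$ to the class of the $G_L$-ambiguous ideal $\mathfrak{a}\mathcal{O}_L$, which lies in $\Po(L/P)$; since this holds for all generators, $\epsilon_{L/K}(\Po(K/P))\subseteq\Po(L/P)$.

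For part $(ii)$, write $H=\Gal(L/K)$. Here $L/P$ need not be Galois, so I cannot use the ambiguous-class description for $\Po(L/P)$; instead I work directly with its generators $\Pi_{\mathfrak{p}^{f}}(L/P)$, $\mathfrak{p}$ a prime of $P$ and $f\ge 1$, and show each of them lies in $I(L)^{H}$. By definition $\Pi_{\mathfrak{p}^{f}}(L/P)=\prod\mathfrak{M}$, over the $\mathfrak{M}\in Max(\mathcal{O}_L)$ with $N_{L/P}(\mathfrak{M})=\mathfrak{p}^{f}$, i.e. with $\mathfrak{M}\cap\mathcal{O}_P=\mathfrak{p}$ and $f(\mathfrak{M}/\mathfrak{p})=f$. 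Any $\sigma\in H$ fixes $K\supseteq P$ pointwise, hence fixes $\mathcal{O}_P$ pointwise; therefore $\sigma(\mathfrak{M})\cap\mathcal{O}_P=\mathfrak{M}\cap\mathcal{O}_P=\mathfrak{p}$, and $\sigma$ induces an $\mathcal{O}_P/\mathfrak{p}$-algebra isomorphism from $\mathcal{O}_L/\mathfrak{M}$ onto $\mathcal{O}_L/\sigma(\mathfrak{M})$, so $f(\sigma(\mathfrak{M})/\mathfrak{p})=f(\mathfrak{M}/\mathfrak{p})=f$. Consequently $\sigma$ permutes the defining set of primes and fixes $\Pi_{\mathfrak{p}^{f}}(L/P)$, i.e. this ideal is $H$-ambiguous. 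Since $\Po(L/P)$ is generated by the classes of such ideals, and each of these classes lies in the image of $I(L)^{H}$ in $\Cl(L)$, which is $\Po(L/K)$, we conclude $\Po(L/P)\subseteq\Po(L/K)$.

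There is no deep obstacle; the argument is careful bookkeeping with the Galois action on ideals. The two points that most need attention are: in $(i)$, using the hypothesis ``$K/P$ Galois'' precisely to restrict each $\sigma\in\Gal(L/P)$ to an element of $\Gal(K/P)$; and in $(ii)$, correctly tracking that an element of $\Gal(L/K)$ — which in general belongs to no Galois group of $L/P$ — nonetheless fixes the prime of $P$ below a given $\mathfrak{M}$ and preserves its residue degree over $\mathcal{O}_P$, so that it really does permute the primes defining $\Pi_{\mathfrak{p}^{f}}(L/P)$. Once these are pinned down, invoking the identity $\Po(\cdot)=I(\cdot)^{G}/P(\cdot)^{G}$ for the Galois layers finishes both parts.
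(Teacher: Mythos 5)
Your proof is correct, but it routes both parts through a different mechanism than the paper does. The paper argues by explicit multiplicative identities among the Ostrowski ideals: for $(i)$ it computes $j_{L/K}(\Pi_{\mathfrak{p}^f}(K/P))=(\Pi_{\mathfrak{p}^{f'}}(L/P))^{e(\gamma/\beta)}$ directly, and for $(ii)$ it exhibits the factorization $\Pi_{\mathfrak{p}^f}(L/P)=\prod_{i}\Pi_{\beta_i^{\mathfrak{f}_i}}(L/K)$ by grouping the primes of $L$ in the support according to the primes of $K$ below them. You instead verify in each case that the relevant ideal is ambiguous under the appropriate Galois group ($G_L=\Gal(L/P)$ in $(i)$, $H=\Gal(L/K)$ in $(ii)$) and then invoke the identification of the relative P\'olya group of a Galois layer with the image of the ambiguous ideals in the class group; this correctly uses the Brumer--Rosen fact, stated in Section 2, that the ideals $\Pi_{\mathfrak{P}^f}$ freely generate $I(\cdot)^{G}$, so that $\Po(\cdot/\cdot)$ is exactly the image of $I(\cdot)^{G}$ in $\Cl(\cdot)$. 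Your invariance checks are sound: in $(i)$ the hypothesis that $K/P$ is Galois is exactly what lets you restrict $\sigma\in G_L$ to $G_K$, and in $(ii)$ an element of $\Gal(L/K)$ fixes $\mathcal{O}_P$ pointwise and hence preserves both the prime of $P$ below $\mathfrak{M}$ and the residue degree $f(\mathfrak{M}/\mathfrak{p})$. The trade-off is that your argument is less computational and arguably more robust, while the paper's explicit factorizations carry extra information (e.g.\ the exponent $e(\gamma/\beta)$ and the product decomposition over the $\beta_i$) of the kind that is reused implicitly in the proof of Theorem 3.2.
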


\begin{proof}
$(i)$ Let $\mathfrak{p}$, $\beta$ and $\gamma$ be respectively primes of $P$, $K$ and $L$ such that $\gamma | \beta | \mathfrak{p}$. Since $K/P$ and $L/P$ are Galois extensions, one has
\begin{equation*}
j_{L/K}(\Pi_{\mathfrak{p}^f}(K/P))=(\Pi_{\mathfrak{p}^{f^{\prime}}}(L/P))^{e(\gamma/\beta)} \in \Po(L/P),
\end{equation*}
 where $f=f(\beta/\mathfrak{p})$ and $f^{\prime}=f(\gamma/\mathfrak{p})$.

$(ii)$ Let
\begin{align*}
\mathfrak{p} \mathcal{O}_K&= \beta_1^{e_1} \beta_2^{e_2} \dots \beta_g^{e_g}, \\
\mathfrak{p} \mathcal{O}_L&= \gamma_1^{e^{\prime}_1} \gamma_2^{e^{\prime}_2} \dots \gamma_t^{e^{\prime}_t},
\end{align*}
be the decomposition forms of $\mathfrak{p}$ in $K/P$ and $L/P$, respectively. For a positive integer $f$, 
let $\Pi_{\mathfrak{p}^f}(L/P)=\prod_{j=1}^d \gamma_j$, and $\{ \beta_1,\beta_2,\dots, \beta_s\}=\{\gamma_j \cap K : j=1,2,\dots,d\}$ be set of the all distinct  prime ideals of $K$ below $\gamma_j$'s. Also for every $i=1,\dots,s$, let $\{ \gamma_{i,1},\dots,\gamma_{i,u_i} \}$ be the set of the all distinct prime ideals of $L$ above $\beta_i$. Since $L/K$ is a Galois extension, for every $i=1,\dots,s$, the ideals $\gamma_{i,1},\gamma_{i,2},\dots,\gamma_{i,u_i}$ have the same ideal norm (over $K$), say $\beta_i^{\mathfrak{f}_i}$. Hence
\begin{equation*}
\Pi_{\mathfrak{p}^f}(L/P)=\prod_{i=1}^s \Pi_{\beta_i^{\mathfrak{f}_i}}(L/K) \in \Po(L/K).
\end{equation*}
\end{proof}

\begin{remark}
For a finite extension $L/K$ of Galois number fields, Chabert  proved that $\epsilon_{L/K}(\Po(K)) \subseteq \Po(L)$, see  \cite[Proposition 3.4]{Chabert 2001}; part $(i)$ in Lemma \eqref{lemma, for P sub N sub M, if M/N is Galois are Galois, then Po(M/P) is contained in Po(M/N)} is a relativization of this result. 
\end{remark}

\begin{remark}
Note that if either $K/P$ or $L/P$ is not Galois, then the containment in the part $(i)$ in Lemma \eqref{lemma, for P sub N sub M, if M/N is Galois are Galois, then Po(M/P) is contained in Po(M/N)}  might not hold. For instance, consider the pure cubic field $K=\mathbb{Q}(\sqrt[3]{19})$. One can show that the Galois closure $L$ of $K$ is a P\'olya field while $\Po(K)=\Cl(K)\simeq \mathbb{Z}/3 \mathbb{Z}$, see \cite[Example 2.9]{Maarefparvar-Rajaei}. On the other hand since $gcd(h(K),[L:K])=1$, $\mathcal{N}_{L/K} \, o \, {\epsilon}_{L/K}:\bar{\mathfrak{a}} \in \Cl(K) \mapsto \bar{\mathfrak{a}}^{[L:K]} \in \Cl(K)$ is injective, so is ${\epsilon}_{L/K}$. Therefore ${\epsilon}_{L/K}(\Po(K))=\Po(K)=\Cl(K) \not \subseteq \Po(L)$. 
%%%see Remark \eqref{remark, Zantema's result about composite of Galois Polya fields}.
\end{remark}

\begin{remark}
 By Corollary \eqref{corollary, if M/N is Galois and unramified and every ideal class of N extended to M is principal, then Cl(N) isomorphic to H^1 and Po(M/N) is trivia} and Lemma \eqref{lemma, for P sub N sub M, if M/N is Galois are Galois, then Po(M/P) is contained in Po(M/N)}, $\Po(H(H(K))/K)=\{0\}$. Hence every number field in the tower of Hilbert class fields for $K$ has trivial relative P\'olya group over $K$.
\end{remark}

%%We recall that the \textit{genus field} $\Gamma(K)$ of $K$, is a composite of an absolute abelian number field with $K$, which is abelian over $K$ and is unramified at all finite places of $K$, and is maximal with these properties \cite[Chapter 4]{Ishida's book}. 

Leriche also proved that the genus field  $\Gamma(K)$ of an abelian number field $K$ is  P\'olya , see \cite[Theorem 3.8]{Leriche 2014}.  But unlike the Hilbert class field, the relative P\'olya group of the genus field, even for abelian number fields, is not necessarily trivial: 

\begin{example}\label{example, relative Polya group of the genus field is not trivial}
For $K=\mathbb{Q}(\sqrt{-23})$, we have $h(K)=3$. By \cite[Lemma 3]{T. Honda}, the Hilbert class field $H(K)$ is a Galois extension of $\mathbb{Q}$. Since $H(K)/K$ is unramified, one can easily show that $H(K)/\mathbb{Q}$ cannot be cyclic (see proof of Lemma \eqref{lemma, necessary and sufficient conditions to class number of E divisible by r}),  so $Gal(H(K)/\mathbb{Q}) \simeq S_3$. On the other hand any compositum of $K$ and a cyclic cubic extension of $\mathbb{Q}$ is abelian. Thus $\Gamma(K)=K$, and $\Po(\Gamma(K)/K)=\Cl(K)\simeq \mathbb{Z}/3\mathbb{Z}$. 
\end{example}

%We conclude this section by the following Chabert's result on the P\'olya group of Galois number fields:

%\begin{proposition} \cite[Corollary 2.4 and Remark 4.9]{Chabert I} \label{proposition, Chabert's corollary 2.4}
%Let $L/K$ be a finite extension of Galois number fields with $G=\Gal(L/\mathbb{Q})$ and $H=\Gal(K/\mathbb{Q})$. Then the following sequence is exact:
%{\footnotesize
%\begin{equation} \label{equation, exact sequence when M and N are Galois over Q}
%\{0 \} \rightarrow  \Ker(\epsilon_{L/K}|_{\Po(K)}) \rightarrow  \Coker(\Inf) \rightarrow \bigoplus _{\mathfrak{P} \mid \disc(L/K)} \mathbb{Z}/ e(\mathfrak{P}) \mathbb{Z}  \rightarrow \Coker(\epsilon_{L/K}|_{\Po(K)}) \rightarrow \{0 \},
%\end{equation}}
%where $Inf$ denotes the inflation morphism.
%Moreover, if $gcd([L:K],[K:\mathbb{Q}])=1$, then $\epsilon_{L/K}|_{\Po(K)}: Po(K) \rightarrow Po(L)$ is injective and we have the following exact sequence:
%{\small
%\begin{equation} \label{equation, exact sequence when M and N are Galois over Q and j is injective}
%\{0 \}  \rightarrow  Coker(\Inf) \rightarrow \bigoplus _{\mathfrak{P} \mid \disc(L/K)} \mathbb{Z}/ e(\mathfrak{P}) \mathbb{Z}  \rightarrow \Po(L)/\epsilon_{L/K}(\Po(K)) \rightarrow \{0 \}.
%\end{equation}}
%\end{proposition}

\section{Main Result} \label{section, Main Result}

 From now on, $K$ is a non-Galois number field of degree $l$ (an odd prime) whose Galois closure has  Galois group isomorphic to $D_l$, the dihedral group of odrer $2l$. Denote the Galois closure of $K$ by $L$ and the unique quadratic subfield of $L$ by $E$.
  
  %let $L$ be a Galois extension of $\mathbb{Q}$ with Galois group
%\begin{align*}
%\Gal(L/\mathbb{Q}) \simeq D_r=<\sigma , \tau : \sigma^r=\tau^2=1, \tau\sigma\tau=\sigma^{-1}>,
%\end{align*}
%and denote the fixed fields of $\sigma$ and $\tau$ by $E$ and $K$, respectively.

 Suppose that $p$ is a ramified prime  in $L$, with decomposition in $L$ by $p \mathcal{O}_L=(\gamma_1 \gamma_2 \dots \gamma_g)^{e(p)}$. Since $e(p)f(p)g=[L:\mathbb{Q}]=2l$, we have $e(p)=2$ or $e(p)=l$ or $e(p)=2l$, where $f(p)$ is the residue class degree of $\gamma_i$'s over $p$. 
 
With these notations, following \cite{H. Cohen Adv.} we restate the complete description of decomposition forms of ramified primes $p$ in $K$ and $L$:

\begin{proposition} \label{proposition, decomposition form of p in K and L, H. Cohen Adv.}
With $K$ and $L$ as above:
\begin{itemize} 
\item[(1)] If $e(p)=2$, then $f(p)=1$. Moreover, if
\begin{equation} \label{eq. 2.1}
p \mathcal{O}_L=\gamma_1^2\gamma_2^2 \dots \gamma_l^2 
\end{equation}
 is the decomposition of $p$ in  $L$, then the decomposition of $p$ in $K$ has the form below:
\begin{equation} \label{eq. 2}
p \mathcal{O}_K=\beta_1 \beta_2^2 \dots \beta_{\frac{l+1}{2}}^2.
\end{equation}
\item[(2)] If $e(p)=l$, then $f(p)=1$ or $f(p)=2$ and $p$ is totally ramified in $K$.
\item[(3)] If $e(p)=2l$, then $p=l$ and it is totally ramified in $K$.
\end{itemize}
\end{proposition}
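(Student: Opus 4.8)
The plan is to read off the entire table from the decomposition and inertia groups of $p$ in the Galois extension $L/\mathbb{Q}$, exploiting that $G:=\Gal(L/\mathbb{Q})\cong D_l$ has a very rigid subgroup lattice. Fix a prime $\gamma$ of $L$ above $p$, and let $D=D_\gamma\supseteq I=I_\gamma$ be its decomposition and inertia groups in $G$; then $|I|=e(p)$, $|D|=e(p)f(p)$, the quotient $D/I$ is cyclic, and $I$ has a normal $p$-subgroup $P$ (the wild inertia) with $I/P$ cyclic of order prime to $p$. Write $H:=\Gal(L/K)$, which has order $2$, so it is one of the $l$ reflection subgroups of $D_l$; note $L/K$ is Galois with group $H$, hence the primes of $K$ above $p$ correspond bijectively to the $H$-orbits on $\{\gamma':\gamma'\mid p\}\cong G/D$, and for the prime $\beta=\gamma\cap K$ one has $e(\gamma/\beta)=|I\cap H|$ and $e(\gamma/\beta)f(\gamma/\beta)=|D\cap H|$, so that $e(\beta/p)=e(p)/|I\cap H|$ and $f(\beta/p)=f(p)\,|I\cap H|/|D\cap H|$ (and similarly at the other primes, after conjugating $D$ and $I$).

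The three facts about $D_l$ (for $l$ an odd prime) that do all the work are: $C_l=\Gal(L/E)$ is the only proper nontrivial normal subgroup and the only subgroup of order $l$; each reflection subgroup is self-normalizing, equal to its own centralizer; and any two distinct reflection subgroups, and also $C_l$ together with any reflection subgroup, meet trivially. Granting these, case $e(p)=2l$ is immediate: $I=D=D_l$, so $p$ is totally ramified in $L$ ($f(p)=1$); since $D_l$ is noncyclic the ramification cannot be tame, so $P\neq\{1\}$, and $P\triangleleft D_l$ with $D_l/P$ cyclic forces $P=C_l$ and $p=l$; finally $H\subseteq I$ gives $e(\gamma/\beta)=2$, hence $e(\beta/p)=l=[K:\mathbb{Q}]$ and $p$ is totally ramified in $K$. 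Case $e(p)=l$ is nearly as quick: $I=C_l$, so $I\cap H=\{1\}$ at every prime above $p$, whence $e(\beta/p)=l$ for each such $\beta$ and (since $\sum e(\beta/p)f(\beta/p)=l$) there is a unique one, i.e. $p$ is totally ramified in $K$; and $|D|=l\,f(p)$ divides $2l$, so $f(p)\in\{1,2\}$.

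The only case with real content is $e(p)=2$. Then $I$ is a reflection subgroup; since $D$ normalizes $I$ and reflection subgroups are self-normalizing, $D=I$, so $f(p)=1$ and $g=l$, which is \eqref{eq. 2.1}. To obtain \eqref{eq. 2} I would count $H$-orbits on $G/D$: writing $\tau$ for the generator of $H$ and $\sigma_0$ for that of $D$, a coset $\sigma D$ is $H$-fixed iff $\sigma^{-1}\tau\sigma=\sigma_0$; because all reflections are conjugate in $D_l$ this has a solution, and because a reflection centralizes only itself the solution set is a single coset $\sigma D$, so there is exactly one fixed orbit, while the remaining $l-1$ primes of $L$ split into $\tfrac{l-1}{2}$ orbits of size $2$; thus $K$ has $\tfrac{l+1}{2}$ primes $\beta_1,\dots,\beta_{(l+1)/2}$ above $p$. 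For the fixed orbit the relevant conjugate of $I$ is exactly $H$, so $e(\gamma/\beta_1)=2$ and $\beta_1$ is unramified of residue degree $1$; for a size-$2$ orbit the relevant conjugate of $I$ is a reflection subgroup $\neq H$ and hence meets $H$ trivially, so $e(\beta_i/p)=2$, $f(\beta_i/p)=1$. The degree identity $\sum_{\beta\mid p}e(\beta/p)f(\beta/p)=1+2\cdot\tfrac{l-1}{2}=l$ confirms that there are no other primes, which is exactly $p\mathcal{O}_K=\beta_1\beta_2^2\cdots\beta_{(l+1)/2}^2$.

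The main obstacle is precisely this orbit bookkeeping in the $e(p)=2$ case: one must track which $G$-conjugate of $I$ is visible at each prime of $K$, verify that the fixed-point count is exactly $1$ (this is where the self-centralizing property of reflections is used), and then be sure the exponents come out as $1,2,2,\dots,2$ rather than some other pattern — the identity $\sum ef=l$ is the safety net that pins this down. Everything else is forced by $D_l$ having essentially only the one proper nontrivial normal subgroup $C_l$.
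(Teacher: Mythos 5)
Your argument is correct, and it is essentially the proof the paper delegates to the literature: the paper's own ``proof'' is a one-line citation to a detailed analysis of decomposition, inertia and wild inertia groups (Serre, Ch.~III; Cohen, Prop.~10.1.26), which is exactly the analysis you carry out, using the rigid subgroup lattice of $D_l$ (unique normal subgroup $C_l$, self-centralizing reflection subgroups) and the identity $\sum e(\beta/p)f(\beta/p)=l$ to pin down the splitting pattern. Your orbit count in the $e(p)=2$ case, including the verification that exactly one coset of $G/D$ is $H$-fixed, is sound, so no gaps remain.
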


\begin{proof}
A detailed analysis of ramification groups using \cite[Chapter III]{Serre} yields the claims, for details see \cite[Proposition 10.1.26]{H. Cohen Adv.}.
\end{proof}
 
%\begin{remark} \label{remark, the set of ramified primes in K and L are coincide and p is ramified in E iff 2 divides e(p)}
%Note that by Proposition \eqref{proposition, decomposition form of p in K and L, H. Cohen Adv.}, the set of ramified primes in the extensions $K/\mathbb{Q}$ and $L/\mathbb{Q}$ coincide. 

%Also for a ramified prime $p$ in $L$ with $e(p)=2$, let $\alpha=\gamma_1 \cap E$ be a prime ideal of $\mathcal{O}_E$ above $p$. We have:
%\begin{equation*}
%2=e(p)=e(\gamma_1/\alpha) e(\alpha/p).
%\end{equation*} 
%Since $e(\gamma_1/\alpha)$ divides  $[L:E]=l$, we have $e(\gamma_1/\alpha)=1$ and hence $p$ is ramified in $E$. Thus $p \mathcal{O}_E=(\Pi_p(E))^2$ and
%\begin{align}
%p\mathcal{O}_L=(\Pi_p(E)\mathcal{O}_L)^2=(\gamma_1\gamma_2 \dots \gamma_l)^2=(\Pi_p(L))^2,
%\end{align}
%which implies that 
%\begin{equation} \label{equation, 23 esfand!!}
%j_{L/E}(\Pi_p(E))=\Pi_p(L).
%\end{equation}

%Similarly, for $e(p)=l$ one can show that $p$ doesn't ramify in $E$ and depending on whether $f(p)=1$ or $f(p)=2$, $p$ splits or remains inert in $E$, respectively. 
%{remark}

\begin{theorem} \label{theorem, main Theorem}
With $E$, $K$, and $L$ as above, we have an exact sequence as follows:
\begin{equation} \label{equation, exact sequence for Polya groups E, K and L}
\{0\} \longrightarrow \Po(E) \longrightarrow \Po(L) \longrightarrow \Po(K).
\end{equation}
Moreover, the $2$-torsion subgroup of $\Po(L)$ is isomorphic to $\Po(E)$ and the $l$-torsion subgroup of $\Po(L)$ is embedded in $\Po(K)$. In particular, if $l \not | \, h(K)$, then $\Po(E) \simeq \Po(L)$.
\end{theorem}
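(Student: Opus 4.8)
The plan is to exploit the tower $\mathbb{Q} \subseteq E \subseteq L$ and $\mathbb{Q} \subseteq K \subseteq L$ together with the action of $G = \Gal(L/\mathbb{Q}) \cong D_l$ on $\Cl(L)$. The key structural fact is that $D_l$ has a normal subgroup $C_l = \Gal(L/E)$ with quotient $C_2 = \Gal(E/\mathbb{Q})$, and a (non-normal) subgroup $C_2 \cong \Gal(L/K)$. First I would set up the two inclusion maps $\epsilon_{L/E}: \Po(E) \to \Po(L)$ and note, via Lemma~\ref{lemma, for P sub N sub M, if M/N is Galois are Galois, then Po(M/P) is contained in Po(M/N)}$(i)$ applied to $\mathbb{Q} \subseteq E \subseteq L$, that $\epsilon_{L/E}(\Po(E)) \subseteq \Po(L)$; similarly I would want a natural map $\Po(L) \to \Po(K)$. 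For the latter, observe $\Po(L) = \Po(L/\mathbb{Q}) \subseteq \Po(L/K)$ by part $(ii)$ of the same lemma (since $L/K$ is Galois), and $\Po(L/K) = I(L)^{\Gal(L/K)}/P(L)^{\Gal(L/K)}$; the relative norm $\mathcal{N}_{L/K}$ then carries ambiguous classes of $L$ over $K$ into $\Cl(K)$, and one checks the image of the Ostrowski ideals lands in $\Po(K)$. So the exact sequence~\eqref{equation, exact sequence for Polya groups E, K and L} should come from: injectivity of $\Po(E) \to \Po(L)$, and exactness at $\Po(L)$, i.e. $\ker(\Po(L) \to \Po(K)) = \epsilon_{L/E}(\Po(E))$.

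For injectivity of $\epsilon_{L/E}: \Po(E) \to \Po(L)$, I would use that $L/E$ is a cyclic degree-$l$ extension with $l$ odd, and compose with $\mathcal{N}_{L/E}$: since $\mathcal{N}_{L/E} \circ \epsilon_{L/E}$ is multiplication by $l = [L:E]$ on $\Cl(E)$, the kernel of $\epsilon_{L/E}$ restricted to $\Po(E)$ consists of $l$-torsion; but one argues separately (using Proposition~\ref{proposition, Zantema's result for quadratic Polya field} — $\Po(E)$ is $2$-elementary since $E$ is quadratic) that $\Po(E)$ has exponent dividing $2$, hence trivial intersection with any $l$-torsion, giving injectivity. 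For exactness at $\Po(L)$, the containment $\epsilon_{L/E}(\Po(E)) \subseteq \ker(\Po(L) \to \Po(K))$ should follow from $\mathcal{N}_{L/K} \circ \epsilon_{L/E}$ being computable: an ideal of $E$ extended to $L$ and then normed down to $K$ — here one uses Proposition~\ref{proposition, decomposition form of p in K and L, H. Cohen Adv.} to see the ramification/splitting pattern forces the resulting $K$-ideal to be principal or a perfect power absorbed into $\Po$ triviality. The reverse containment $\ker(\Po(L) \to \Po(K)) \subseteq \epsilon_{L/E}(\Po(E))$ is the substantive point.

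To get the torsion statements, I would decompose $\Po(L)$ under the $G$-action. Because $|G| = 2l$ with $2, l$ coprime, the $l$-Sylow and $2$-part of the abelian group $\Po(L)$ are handled separately. The $l$-torsion part: restriction along $\mathcal{N}_{L/K}$ (or rather the map to $\Po(K)$) should be injective on the $l$-primary component, because its kernel is $\epsilon_{L/E}(\Po(E))$ which is $2$-elementary, hence has trivial $l$-part; this embeds the $l$-torsion of $\Po(L)$ into $\Po(K)$. The $2$-torsion part: here I would show $\Po(E) \to \Po(L)$ is onto the $2$-primary component, using that the composite $\Po(E) \to \Po(L) \to \Po(E)$ (extend then norm, or norm then extend via $\mathcal{N}_{L/E}$) is an isomorphism on the $2$-part because $[L:E] = l$ is a $2$-adic unit; combined with injectivity this gives $\Po(E) \cong \Po(L)[2^\infty]$. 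Finally, if $l \nmid h(K)$ then $\Po(K)$ has no $l$-torsion, so the $l$-torsion of $\Po(L)$ (which embeds in $\Po(K)$) vanishes, leaving $\Po(L) = \Po(L)[2^\infty] \cong \Po(E)$.

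\textbf{Main obstacle.} The hardest step is establishing exactness at $\Po(L)$ — specifically the inclusion $\ker(\Po(L)\to\Po(K)) \subseteq \epsilon_{L/E}(\Po(E))$ — which requires a careful prime-by-prime analysis of how ambiguous ideals of $L$ over $\mathbb{Q}$ interact with the relative norm to $K$, leaning on the full ramification classification in Proposition~\ref{proposition, decomposition form of p in K and L, H. Cohen Adv.}. The cohomological bookkeeping ($H^1(G,U_L)$ versus $H^1(C_l, U_L)$ and $H^1(C_2,U_L)$, and inflation-restriction) is the technical engine, and getting the maps to be genuinely the norm/extension maps — rather than merely abstract connecting homomorphisms — is where care is needed.
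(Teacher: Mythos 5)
Your architecture coincides with the paper's: use Zantema's sequence to see that $\Po(L)$ is $2l$-torsion and split it into its $2$-torsion part $\Po(L)_2$ and $l$-torsion part $\Po(L)_l$; inject $\Po(E)$ via $\epsilon_{L/E}$ by noting that $\mathcal{N}_{L/E}\circ\epsilon_{L/E}$ is multiplication by $l$ on the elementary $2$-group $\Po(E)$; and send the $l$-part to $\Po(K)$ by the relative norm. The injectivity of $\Po(E)\to\Po(L)$ is complete as you state it. But the proposal stops short of the two verifications that actually carry the theorem, and both rest on the explicit decomposition laws of Proposition \eqref{proposition, decomposition form of p in K and L, H. Cohen Adv.}.

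First, for $\Po(E)\simeq\Po(L)_2$ your argument only yields an injection $\Po(E)\hookrightarrow\Po(L)_2$: the composite $\mathcal{N}_{L/E}\circ\epsilon_{L/E}$ being an automorphism of $\Po(E)$ says nothing about surjectivity onto the $2$-torsion component. You need the reverse injection $\mathcal{N}_{L/E}\colon\Po(L)_2\hookrightarrow\Po(E)$, which you mention only parenthetically; the paper obtains it by computing $\mathcal{N}_{L/E}([\Pi_p(L)])=[\Pi_p(E)]^{l}$ on the $2$-torsion generators (using the splitting pattern \eqref{eq. 2.1} forced by $e(p)=2$) and then using that raising to the $l$-th power is injective on $2$-torsion. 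Second --- the step you yourself flag as the main obstacle and do not carry out --- exactness at $\Po(L)$ amounts to (a) showing that the norm of an Ostrowski class of $L$ with $l\mid e(p)$ actually lands in $\Po(K)$, which is not automatic since $K/\mathbb{Q}$ is not Galois and $\Po(K)$ is defined by the ideals $\Pi_q(K)$ rather than by ambiguity, and (b) injectivity on the $l$-part. The paper settles both by the case split $e(p)=l$ versus $e(p)=2l$: in the first case $\mathcal{N}_{L/K}$ of the Ostrowski class equals $[\Pi_p(K)]^{2}$ while $[\Pi_p(K)]^{l}$ is trivial (total ramification in $K$), so principality of the image forces $\Pi_p(K)$, hence $j_{L/K}(\Pi_p(K))=\Pi_p(L)$, to be principal; in the second case $\mathcal{N}_{L/K}([\Pi_l(L)])=[\Pi_l(K)]$, and principality of $\Pi_l(K)$ forces $[\Pi_l(L)]^{2}=1$, i.e.\ triviality of the $l$-part. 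Without these computations the proposal is a correct plan, not a proof. A further small point: you take the second map of the sequence to be $\mathcal{N}_{L/K}$ on all of $\Po(L)$, whereas the paper uses $\mathcal{N}_{L/K}$ composed with the projection onto $\Po(L)_l$; with your choice you must additionally check that $\mathcal{N}_{L/K}$ annihilates $\Po(L)_2$, which again requires the decomposition form \eqref{eq. 2}.
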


\begin{proof}
By Zantema's exact sequence \eqref{equation, Zantema's exact sequence}, $\Po(L)$ is a $2l$-torsion group. 
 Also $\Po(L)$ is the direct sum of its $2$-torsion group $\Po(L)_2$ and its $l$-torsion group $\Po(L)_l$. 
 By Lemma \eqref{lemma, for P sub N sub M, if M/N is Galois are Galois, then Po(M/P) is contained in Po(M/N)}, $\epsilon_{L/E}(\Po(E)) \subseteq \Po(L)$. Since $\Po(E)$ is a $2$-torsion group,
 \begin{equation*}
 \mathcal{N}_{L/E} o \epsilon_{L/E}|_{\Po(E)}: [\Pi_p(E)] \in \Po(E) \mapsto [\Pi_p(E)]^l \in \Po(E)
 \end{equation*}
is injective, and so is  $\epsilon_{L/E}|_{\Po(E)} : \Po(E) \rightarrow \Po(L)$.

 On the other hand, by Proposition \eqref{proposition, decomposition form of p in K and L, H. Cohen Adv.}, one can easily show that for an ideal class $[\Pi_p(L)] \in \Po(L)_2$,  we have $\mathcal{N}_{L/E}([\Pi_p(L)])=([\Pi_p(E)])^l \in \Po(E)$.
Hence $\mathcal{N}_{L/E}(\Po(L)_2) \subseteq \Po(E)$, and again since $\Po(E)$ is a $2$-torsion group, 
\begin{equation*}
\mathcal{N}_{L/E}|_{\Po(L)_2}:\Po(L)_2 \rightarrow \Po(E)
\end{equation*}
 is injective.  Therefore $\Po(E) \simeq \Po(L)_2$ and we find the following exact sequence:
\begin{displaymath}
\xymatrix{
\{0\} \ar[r]  & \Po(E)  \ar[r]^{\epsilon_{L/E}}  & \Po(L) \ar[r]^{\pi_l} & \Po(L)_l \ar[r] & \{0\}, 
 }
\end{displaymath}

where $\pi_l: \Po(L) \rightarrow \Po(L)_l$ is the projection map (i.e. multiplication by $2$). 

\vspace*{0.3cm}

We claim that  $\mathcal{N}_{L/K}(\pi_l(\Po(L)) \subseteq \Po(K)$ and $\mathcal{N}_{L/K}|_{\Po(L)_l}$ is also  injective. To prove these, let $p$ be a ramified prime  in $L$ such that $l|e(p)$. By Proposition \eqref{proposition, decomposition form of p in K and L, H. Cohen Adv.}, $p$ is totally ramified in $K$, say $p \mathcal{O}_K=\beta^l=(\Pi_p(K))^l$. Two cases are possible:

\begin{itemize}
\item
If $e(p)=l$ then by Proposition \eqref{proposition, decomposition form of p in K and L, H. Cohen Adv.}, one has $p \mathcal{O}_L=(\gamma_1 \gamma_2)^l=(\Pi_p(L))^l$ or $p \mathcal{O}_L=\gamma^l=(\Pi_{p^2}(L))^l$. In this case, depending on whether $f(p)=1$ or $f(p)=2$,  $\mathcal{N}_{L/K}([\Pi_p(L)])=[\Pi_p(K)]^2 \in \Po(K)$ or $\mathcal{N}_{L/K}([\Pi_{p^2}(L)])=[\Pi_p(K)]^2 \in \Po(K)$, respectively. Also if $(\Pi_p(K))^2$ is principal, then $\Pi_p(K)$ must be principal, since $(\Pi_p(K))^l$ is also principal. On the other hand $j_{L/K}(\Pi_p(K))=\Pi_p(L)$ (resp. $\Pi_{p^2}(L)$), which implies that  $\Pi_p(L)$ (resp. $\Pi_{p^2}(L)$) is also principal.

\vspace*{0.3cm}
\item
If $e(p)=2l$, then by Proposition \eqref{proposition, decomposition form of p in K and L, H. Cohen Adv.}, $p=l$ ramifies totally in $L$ and so in all its subextensions. In this case $N_{L/K}(\Pi_l(L))=\Pi_l(K)$ and if  $\Pi_l(K)$ is principal, then $j_{L/K}(\Pi_l(K))=(\Pi_l(L))^2$ is principal which implies that the ideal class $[\Pi_l(L)]$ belongs to $\Po(L)_2$. In other words, $\pi_l[\Pi_l(L)] \in \Po(L)_l \simeq \frac{\Po(L)}{\epsilon_{L/E}(\Po(E))}$ would be trivial.

\end{itemize}
%\begin{equation*}
%\mathcal{N}_{L/K}\left(\pi_l([\Pi_l(L)])\right) \subseteq \mathcal{N}_{L/K}\left([\Pi_l(L)]\right) =[\Pi_l(K)] \in \Po(K).
%\end{equation*}
%Write $[\Pi_l(L)]$ and $\mathcal{N}_{L/K}\left([\Pi_l(L)]\right)$ respectively as
%In particular $N_{L/K}\, o \, \pi_l([\Pi_l(L)]) \in \Po(K)$. 
%\begin{align}
%[\Pi_l(L)]&:= ( \pi_2([\Pi_l(L)]), \pi_l([\Pi_l(L)])) \in \Po(L)_2 \bigoplus \Po(L)_l, \\
%\mathcal{N}_{L/K}\left([\Pi_l(L)]\right)&:=\mathcal{N}_{L/K}([ \pi_2(\Pi_l(L)]) \, . \, \mathcal{N}_{L/K}\left(\pi_l([\Pi_l(L)])\right). \label{equation, writing N_{L/K}(Pi_l(L))}
%\end{align}

%By equation \eqref{equation, writing N_{L/K}(Pi_l(L))}, if $\mathcal{N}_{L/K}\left(\pi_l([\Pi_l(L)])\right)$ is principal, then  we have
%\begin{align} \label{equation, Pi_l(K) equals to projection on 2-component}
%[\Pi_l(K)]=\mathcal{N}_{L/K}\left([\Pi_l(L)]\right)=\mathcal{N}_{L/K}([ \pi_2(\Pi_l(L)]).
%\end{align}
%By equation \eqref{equation, Pi_l(K) equals to projection on 2-component},  $(\Pi_l(K))^2$ is principal and since
 %$(\Pi_l(K))^l=p \mathcal{O}_K$, $\Pi_l(K)$ is principal. 

%Since $\left(\mathcal{N}_{L/K}([ \pi_2(\Pi_l(L)]) \right)^l \subseteq \left(\mathcal{N}_{L/K}([\Pi_l(L)])\right)^l= \left([\Pi_l(K)] \right)^l$, the ideal class $\left(\mathcal{N}_{L/K}([ \pi_2(\Pi_l(L)]) \right)^l$ is trivial and so is $\mathcal{N}_{L/K}\left([ \pi_2(\Pi_l(L)]\right)$. Hence 
%\begin{equation*}
%\mathcal{N}_{L/K}\left(\pi_l([\Pi_l(L)])\right)=\mathcal{N}_{L/K}\left([\Pi_l(L)]\right) =[\Pi_l(K)] \in \Po(K).
%\end{equation*}

Summing up the above arguments, we have proved that 
\begin{equation*}
\mathcal{N}_{L/K}|_{\Po(L)_l}:\Po(L)_l \rightarrow \Po(K)
\end{equation*}
is injective, as claimed. Therefore the following sequence is exact: 
\begin{displaymath}
\xymatrix{
\{0\} \ar[r]  & \Po(E)  \ar[r]^{\epsilon_{L/E}}  & \Po(L) \ar[r]^{\varphi} & \Po(K) ,
 }
\end{displaymath}
where $\varphi:=\mathcal{N}_{L/K}|_{\Po(L)_l}\, o \, \pi_l$.
\end{proof}

\begin{remark}  \label{remark, Zantema's result about composite of Galois Polya fields}
Zantema  \cite{zantema}, proved that for two finite Galois extensions $K_1$ and $K_2$ of $\mathbb{Q}$ with $M=K_1.K_2$, if for every prime number $p$, the ramification indices of $p$ in $K_1$ and $K_2$ are coprime, then P\'olya-ness of $K_1$ and $K_2$ implies that $M$ is also P\'olya. Conversely, if $\gcd([K_1:\mathbb{Q}],[K_2:\mathbb{Q}])=1$ and $M$ is P\'olya then $K_1$ and $K_2$ are P\'olya, see \cite[Theorem 3.4]{zantema}. Under these hypotheses, 
 one can easily show that $\epsilon_{M/K_1}(\Po(K_1)).\epsilon_{M/K_2}(\Po(K_2))=\Po(M)$, see 
\cite{Chabert 2001}. The condition on relative primality of the degrees is necessary as was shown in \cite{HR1} and \cite{HR2} in the case of biquadratic fields.
Also the condition on Galois-ness of both $K_1$ and $K_2$ is necessary: 

With the notations in this section, for every ramified prime $p$ in $L$ with $e(p)=2$, by Proposition \eqref{proposition, decomposition form of p in K and L, H. Cohen Adv.}, $\Pi_p(K)=\beta_1 \beta_2 \dots \beta_{\frac{l+1}{2}}$, and $j_{L/K}(\Pi_p(K))=\gamma_1 \Pi_p(L)$. Since $\Gal(L/\mathbb{Q})$ acts transitively on the set $\{\gamma_1,\dots,\gamma_l\}$, $\gamma_1$ is not ambiguous. Hence $\epsilon_{L/K}(\Po(K)) \not \subseteq \Po(L)$.
\end{remark}

%Unsing the results in Section \eqref{section, Generalization of Zantema's exact sequence} and Theorem \eqref{theorem, main Theorem}, we find some interesting consequences:

%\begin{corollary} \label{corollary, if E is Polya then Po(L) is embedded in Po(K)}
%If $E$ is P\'olya, then $\Po(L)$ is embedded in $\Po(K)$. 
%\end{corollary}
%\begin{proof}
%Immediately follows from Theorem \eqref{theorem, main Theorem}.
%\end{proof}
%\begin{corollary} \label{corollary, if h(K) is not divisible by r, then Po(L) is isomorphic to Po(E)}
%If $h(K)$ is not divisible by $l$, then $\Po(L) \simeq \Po(E)$. In particular, if $K$ is P\'olya then $\Po(L) \simeq \Po(E)$.
%\end{corollary}

%\begin{proof}
%If $h(K)$ is not divisible by $l$, then by the exact sequence in Theorem \eqref{theorem, main Theorem}, the $l$-torsion subgroup of $\Po(L)$ is trivial which implies that $\Po(L)$ is a $2$-torsion group isomorphic to $\Po(E)$. Note that by Zantema's result \cite[Theorem 6.9]{zantema}, $K$ is P\'olya if and only if $h(K)=1$.
%\end{proof}

%\begin{corollary} \label{corollary, if E and K are Polya then so is L}
%If $E$ and $K$ are P\'olya, then so is $L$.
%\end{corollary}

%\begin{proof}
%Immediately follows from Theorem \eqref{theorem, main Theorem}.
%\end{proof}

%Using Chabert's results in Proposition \eqref{proposition, Chabert's corollary 2.4}, we have:

\begin{corollary} \label{corollary, if L/E is unramified then Po(E) isomorph. to Po(L)}
If all finite places of $E$ are unramified in $L$, then $\Po(L) \simeq \Po(E)$.
% and $H^1(\Gal(L/\mathbb{Q}),U_L) \simeq H^1(\Gal(E/\mathbb{Q}),U_E)$.
\end{corollary}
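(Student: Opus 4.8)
The plan is to show that the hypothesis forces every prime of $\mathbb{Q}$ ramified in $L$ to have ramification index exactly $2$, so that Zantema's exact sequence \eqref{equation, Zantema's exact sequence} exhibits $\Po(L)$ as a quotient of a $2$-torsion group; the conclusion then drops out of Theorem \eqref{theorem, main Theorem}.

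First I would fix a prime $p$ of $\mathbb{Q}$ that ramifies in $L$ and choose primes $\gamma \mid \mathfrak{q} \mid p$ with $\gamma$ a prime of $L$ and $\mathfrak{q} = \gamma \cap E$ the prime of $E$ below it. By multiplicativity of ramification indices in the tower $\mathbb{Q} \subseteq E \subseteq L$ one has $e_p = e(\gamma/p) = e(\gamma/\mathfrak{q})\, e(\mathfrak{q}/p)$. The assumption that all finite places of $E$ are unramified in $L$ gives $e(\gamma/\mathfrak{q}) = 1$, while $e(\mathfrak{q}/p)$ divides $[E:\mathbb{Q}] = 2$; hence $e_p \mid 2$ for every $p$. (Alternatively, one can invoke Proposition \eqref{proposition, decomposition form of p in K and L, H. Cohen Adv.} directly: in the cases $e(p) = l$ and $e(p) = 2l$ one would have $l \mid e(\gamma/\mathfrak{q})$ because $\gcd(l,2) = 1$, contradicting that $L/E$ is unramified at finite places, so only $e(p) = 2$ can occur.)

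Next I would plug this into Zantema's exact sequence \eqref{equation, Zantema's exact sequence} for $L/\mathbb{Q}$: the surjection $\bigoplus_{p \text{ prime}} \mathbb{Z}/e_p\mathbb{Z} \twoheadrightarrow \Po(L)$ then shows that $\Po(L)$ is killed by $2$, i.e. its $l$-torsion subgroup $\Po(L)_l$ is trivial. Since, as recorded in the proof of Theorem \eqref{theorem, main Theorem}, $\Po(L) = \Po(L)_2 \oplus \Po(L)_l$ with $\Po(L)_2 \simeq \Po(E)$, we conclude $\Po(L) \simeq \Po(E)$, as asserted.

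I do not expect any real obstacle here; the only step needing a moment of care is excluding $e(p) \in \{l, 2l\}$, and that is immediate once one notes that $L/E$ has prime degree $l$ coprime to $[E:\mathbb{Q}] = 2$, so any ramification in $L/E$ at a prime over $p$ would already show up as an $l$-divisible part of $e_p$ incompatible with $L/E$ being unramified.
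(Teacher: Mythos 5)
Your proof is correct and follows essentially the same route as the paper: show that the hypothesis forces every ramified rational prime to have ramification index $2$ in $L$ (via multiplicativity of ramification indices in the tower $\mathbb{Q}\subseteq E\subseteq L$), deduce from Zantema's exact sequence that the $l$-torsion part $\Po(L)_l$ vanishes, and conclude with the decomposition $\Po(L)=\Po(L)_2\oplus\Po(L)_l$ and the isomorphism $\Po(L)_2\simeq\Po(E)$ from Theorem \eqref{theorem, main Theorem}. No issues.
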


\begin{proof}
We claim if $L/E$ is unramified, then every ramified prime $p$ in $L/\mathbb{Q}$ has ramification index $2$. Otherwise let $p$ be a ramified prime in $L/\mathbb{Q}$ with $l | e(p)$, where $e(p)$ denotes the ramification index of $p$ in $L/\mathbb{Q}$. Let $\mathfrak{p}$ and $\gamma$ be prime ideals in $E$ and $L$ above $p$, respectively. Since
$l | e(p)=e(\gamma/\mathfrak{p}). e(\mathfrak{p}/p)$,  $\mathfrak{p}$ ramifies in $L/E$ and we reach a contradiction. Hence  the $l$-torsion subgroup $\Po(L)_l$ of $\Po(L)$ would be trivial and the statement follows from Theorem \eqref{theorem, main Theorem}. 
\end{proof}

If $L/E$ is unramified, by class field theory $h(E)$ is divisible by $l$. 
 By an argument  similar to Honda's result for $l=3$ \cite[Proposition 10]{T. Honda}, we give necessary and sufficient conditions for divisibility of class number of a quadratic field by $l$:

\begin{lemma} \label{lemma, necessary and sufficient conditions to class number of E divisible by r}
Suppose that $N$ is a quadratic field and $l|h(N)$. Then there exists a $D_l$-extension $M$ of $\mathbb{Q}$ such that $N$ is the quadratic subfield of $M$, and $M$ is unramified over $N$. Conversely, let $M$ be a $D_l$-extension of $\mathbb{Q}$ which is the splitting field of the irreducible polynomial 
\begin{equation}
f(X)=X^l+a_2 X^{l-2}+a_3 X^{l-3}+ \dots +a_{l-1} X +a_l , \quad a_i \in \mathbb{Z} \label{eq. 2.6}
\end{equation}
over $\mathbb{Q}$. If $\gcd(a_2,a_3, \dots, a_{l-1}, l.a_l)=1$, then class number of the unique quadratic subfield of $M$ is divisible by $l$.
\end{lemma}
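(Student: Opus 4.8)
The plan is to prove the two directions separately.

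\medskip

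\textbf{First direction.} Suppose $N$ is quadratic with $l \mid h(N)$. By the theory of finite abelian groups, $\Cl(N)$ has a quotient isomorphic to $C_l$, so class field theory provides an unramified cyclic degree-$l$ extension $M$ of $N$. The key point is to show $M/\mathbb{Q}$ is Galois with group $D_l$. First I would argue $M/\mathbb{Q}$ is Galois: the non-trivial element $\sigma \in \Gal(N/\mathbb{Q})$ extends to an automorphism $\tilde\sigma$ of $\bar{\mathbb{Q}}$, and $\tilde\sigma(M)$ is again unramified cyclic of degree $l$ over $N$ corresponding to the subgroup $\sigma$-conjugate to the one defining $M$; one must check this conjugate subgroup can be chosen equal to the original, or rather that the compositum of $M$ and $\tilde\sigma(M)$ is still unramified over $N$ — if $M/\mathbb{Q}$ were not Galois, then $M\tilde\sigma(M)$ would be a larger unramified extension of $N$, and by iterating we may pass to the maximal unramified subextension stable under $\sigma$, which is automatically Galois over $\mathbb{Q}$; replacing $M$ by a suitable degree-$l$ piece of it (using that $\sigma$ acts on the elementary abelian $l$-part and has an eigenvector, or simply that $\Gal(M/N)$ being abelian of $l$-power order has a $\sigma$-stable subgroup of index $l$) gives the desired $M$ with $M/\mathbb{Q}$ Galois. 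Then $\Gal(M/\mathbb{Q})$ is an extension of $C_2$ by $C_l$; since $l$ is odd, $\mathrm{Aut}(C_l) = C_{l-1}$ and the extension splits, so $\Gal(M/\mathbb{Q})$ is either $C_{2l}$ or $D_l$. It cannot be $C_{2l}$: an abelian unramified extension $M/N$ with $M/\mathbb{Q}$ abelian would, together with the fact that the inertia subgroups at ramified primes of $\mathbb{Q}$ in $M$ must generate $\Gal(M/\mathbb{Q})$ modulo ... — more cleanly, if $\Gal(M/\mathbb{Q}) = C_{2l}$ then $M$ contains a unique subfield $F$ with $[F:\mathbb{Q}]=l$, cyclic over $\mathbb{Q}$, and $F \cap N = \mathbb{Q}$, so $M = FN$ and $M/N$ is ramified exactly where $F/\mathbb{Q}$ is ramified (the ramification in $F/\mathbb{Q}$, being tame or not, is not killed by the disjoint base change $N/\mathbb{Q}$ since $\gcd(l,2)=1$), contradicting that $M/N$ is unramified unless $F = \mathbb{Q}$. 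Hence $\Gal(M/\mathbb{Q}) \simeq D_l$ and $N$ is its unique quadratic subfield.

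\medskip

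\textbf{Second direction.} Now let $M$ be the splitting field of the irreducible $f(X) = X^l + a_2 X^{l-2} + \cdots + a_l$ over $\mathbb{Q}$ with $\Gal(M/\mathbb{Q}) \simeq D_l$, let $K = \mathbb{Q}(\theta)$ for a root $\theta$ of $f$ (a degree-$l$ non-Galois subfield), and let $N$ be the quadratic subfield of $M$. We want $l \mid h(N)$, for which by class field theory (and the first direction's analysis) it suffices to show $M/N$ is unramified at all finite primes — equivalently, that every prime $p$ ramifying in $M/\mathbb{Q}$ has ramification index $e(p) = 2$ in $M/\mathbb{Q}$, so that (as in the proof of Corollary \eqref{corollary, if L/E is unramified then Po(E) isomorph. to Po(L)}) inertia is entirely accounted for by $N/\mathbb{Q}$. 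By Proposition \eqref{proposition, decomposition form of p in K and L, H. Cohen Adv.}, if some ramified $p$ had $l \mid e(p)$ then $p$ would be totally ramified in $K$. So the heart of the matter is: the coprimality hypothesis $\gcd(a_2, a_3, \dots, a_{l-1}, l a_l) = 1$ forces no prime to be totally ramified in $K = \mathbb{Q}[X]/(f)$. For a prime $p \neq l$, total ramification in $K$ would mean $f(X) \equiv (X - c)^l \pmod p$ for some $c \in \mathbb{Z}$; comparing coefficients, $\binom{l}{1}(-c) \equiv a_1 = 0$, $\binom{l}{2} c^2 \equiv a_2$, etc. From $lc \equiv 0 \pmod p$ and $p \neq l$ we get $c \equiv 0 \pmod p$, whence $a_2 \equiv a_3 \equiv \cdots \equiv a_{l-1} \equiv 0 \pmod p$ and $a_l \equiv (-c)^l \equiv 0 \pmod p$, so $p$ divides $\gcd(a_2,\dots,a_{l-1},a_l)$, hence divides $\gcd(a_2,\dots,a_{l-1}, l a_l)$, a contradiction. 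For $p = l$: if $l$ were totally ramified in $K$, reducing mod $l$ and using that $f' $ has a repeated root structure, $f(X) \equiv (X-c)^l \equiv X^l - c \pmod l$ (as $(X-c)^l = X^l - c^l$ and $c^l \equiv c$), forcing $a_2 \equiv \cdots \equiv a_{l-1} \equiv 0 \pmod l$ and $a_l \equiv -c \pmod l$; then $l \mid \gcd(a_2,\dots,a_{l-1}, l a_l)$ — wait, $l \mid l a_l$ automatically, so this needs the refinement that $l a_l$'s contribution to the gcd is via $a_l$ only up to the factor $l$: the hypothesis $\gcd(a_2,\dots,a_{l-1}, l a_l) = 1$ with $l \mid a_2, \dots, l\mid a_{l-1}$ already gives $\gcd \geq l > 1$ unless not all of $a_2,\dots,a_{l-1}$ are divisible by $l$; so indeed $l$ cannot be totally ramified in $K$. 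Thus no prime is totally ramified in $K$, so by Proposition \eqref{proposition, decomposition form of p in K and L, H. Cohen Adv.} every ramified $p$ in $L = M$ has $e(p) = 2$, $M/N$ is unramified, and $l \mid h(N)$.

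\medskip

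\textbf{Main obstacle.} The routine coefficient-comparison in the second direction is the easy part; the genuine difficulty is the first direction's assertion that one can select the unramified cyclic degree-$l$ extension $M/N$ so that $M/\mathbb{Q}$ is Galois. The clean way is to observe $\Gal(N/\mathbb{Q}) = \langle \sigma \rangle$ acts on the Hilbert class field $H(N)$ (which is Galois over $\mathbb{Q}$ since $H(N)$ is characteristic), hence on $\Cl(N) \simeq \Gal(H(N)/N)$ by the Artin map, and this action sends a prime class to its $\sigma$-image; a standard fact (Honda's argument, cited for $l=3$) is that $\sigma$ acts on the $l$-torsion $\Cl(N)[l]$ and, because $\sigma^2 = 1$ with $l$ odd, $\Cl(N)[l]$ decomposes into $(\pm 1)$-eigenspaces, at least one of which is nonzero; a line in an eigenspace is $\sigma$-stable and its fixed field over $N$ is then normal over $\mathbb{Q}$ with group $C_{2l}$ or $D_l$, and the ramification argument above rules out $C_{2l}$. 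I would need to make sure the "at least one eigenspace is nonzero" step is valid over $\mathbb{Q}$ — it is, since a nonzero $\mathbb{F}_l[\langle\sigma\rangle]$-module with $\langle\sigma\rangle \simeq C_2$ and $l$ odd is semisimple with simple summands the two one-dimensional characters — and cite Honda \cite{T. Honda} for the template.
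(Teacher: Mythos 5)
Your proof is correct and follows essentially the same route as the paper's: for the first direction, an unramified cyclic degree-$l$ extension of $N$ arranged to be Galois over $\mathbb{Q}$ (the paper cites Honda's lemma where you supply the $\mathbb{F}_l[\langle\sigma\rangle]$-eigenspace argument), with $C_{2l}$ excluded because the cyclic degree-$l$ subfield would force a prime with ramification index divisible by $l$, hence ramification in $M/N$; for the converse, total ramification in the degree-$l$ field forces $f\equiv (X-c)^{l}\ (\mathrm{mod}\ p)$ and the same coefficient comparison (splitting into $p\neq l$ and $p=l$) that the paper uses to contradict $\gcd(a_2,\dots,a_{l-1},l\,a_l)=1$. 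No substantive differences to report.
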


\begin{proof}
Assume that $h(N)$ is divisible by $l$. Hence there exists an unramified abelian extension $M$ of degree $l$ over $N$. One can show that $M$ is a Galois extension of $\mathbb{Q}$, thanks to  Honda's lemma \cite[Lemma 3]{T. Honda}.
If $\Gal(M/\mathbb{Q})\simeq C_{2l}$, the cylic group of order $2l$, denote the unique subfield of $M$ of degree $l$ over $\mathbb{Q}$ by $F$. Hence $F$ is a cyclic extension of $\mathbb{Q}$, and so every ramified prime $p$  in the extension $F/\mathbb{Q}$, is totally ramified. Therefore, $p$ has the ramification index $l$ or $2l$ in the extension $M/\mathbb{Q}$. In both cases, there exists a prime ideal of $N$, above $p$ which ramifiies in the extension $M/N$ and we reach a contradiction.
Thus $M$ is not abelian over $\mathbb{Q}$, i.e. $\Gal(M/\mathbb{Q}) \simeq D_l$. 

Conversely, let $M$ be a $D_l$-extension of $\mathbb{Q}$. Let $N$ be the unique quadratic subfield of $M$ and denote
a subfield of $M$ of degree $l$ over $\mathbb{Q}$ by $F$. Let $\mathfrak{p}$ be a prime ideal of $N$ ramified in $M$. Hence for $p=\mathfrak{p}\cap \mathbb{Q}$, its ramification index is divisible by $l$. By Proposition \eqref{proposition, decomposition form of p in K and L, H. Cohen Adv.}, $p$  totally ramifies in $F/\mathbb{Q}$ which implies that there exists  $f(X)\in \mathbb{Z}[X]$ with 
\begin{equation*}
f(X) \equiv (X-h)^l ({\mathrm{mod}\, p}),
\end{equation*}
where $h \in \mathbb{Z}$ and $M$ is the splitting field of $f(X)$ over $\mathbb{Q}$. From this congruence equation, we have either 
\begin{equation*}
p=l \quad \text{and} \quad l \mid \gcd(a_2,a_3, \dots, a_{l-1}),
\end{equation*}
or
\begin{equation*}
p \mid \gcd(a_2,a_3, \dots, a_{l-1},a_l).
\end{equation*}
Thus $p$ divides $\gcd(a_2,a_3, \dots, a_{l-1},l.a_l)$. On the other hand, if there exists no totally ramified prime in the extension $F/\mathbb{Q}$, by Proposition \eqref{proposition, decomposition form of p in K and L, H. Cohen Adv.} every ramified prime in $M/\mathbb{Q}$ has ramification index $2$, which implies that $M/N$ is unramified. 
\end{proof}

%\begin{remark}
%For $N=\mathbb{Q}(\sqrt{-23})$, as mentioned in Example \eqref{example, relative Polya group of the genus field is not trivial}, $\Gamma(N)=N$. One can also see this, using the above lemma:

%Since $h(N)=3$, by the above lemma, there exists a $S_3$-extension $M$ of $\mathbb{Q}$ such that $N$ is the unique quadratic subfield of $M$. Indeed $M=H(N)$, and since $\Gal(M/\mathbb{Q})\simeq S_3$, but any compositum of $N$ and a cyclic cubic extension of $\mathbb{Q}$ is abelian. So $M$ cannot be $\Gamma(N)$,
%hence $\Gamma(N)=N$.
%\end{remark}

Using Corollary \eqref{corollary, if L/E is unramified then Po(E) isomorph. to Po(L)} and Lemma \eqref{lemma, necessary and sufficient conditions to class number of E divisible by r}, we obtain:

\begin{corollary} \label{corollary, if gcd(a,3b)=1 and E is Polya, then so is L}
Assume that $L$ is a $D_l$-extension of $\mathbb{Q}$ and denote the unique quadratic subfield of $L$ by $E$. Let $L$ be the splitting field of an irreducible polynomial 
\begin{equation*}
f(X)=X^l+a_2 X^{l-2}+a_3 X^{l-3}+ \dots +a_{l-1} X +a_l , \quad a_i \in \mathbb{Z} 
\end{equation*}
over $\mathbb{Q}$. If $\gcd(a_2,a_3, \dots, a_{l-1}, l.a_l)=1$, then $\Po(E) \simeq \Po(L)$.
\end{corollary}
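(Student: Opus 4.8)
The plan is to obtain the isomorphism directly from Corollary~\ref{corollary, if L/E is unramified then Po(E) isomorph. to Po(L)}, so that the whole task reduces to checking that the arithmetic condition $\gcd(a_2,\dots,a_{l-1},l\,a_l)=1$ forces every finite place of $E$ to be unramified in $L$. This is essentially the content of the converse half of the proof of Lemma~\ref{lemma, necessary and sufficient conditions to class number of E divisible by r}: most of the work is already done there, and the only point to make explicit is that that argument actually produces unramifiedness of $L/E$, not merely the weaker conclusion $l\mid h(E)$ recorded in the statement of the Lemma.

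Concretely, I would argue as follows. Suppose some finite prime $p$ of $\mathbb{Q}$ ramifies in $L$ with $l\mid e(p)$; by Proposition~\ref{proposition, decomposition form of p in K and L, H. Cohen Adv.} this means $e(p)\in\{l,2l\}$ and $p$ is totally ramified in a subfield $F$ of $L$ of degree $l$ over $\mathbb{Q}$. Then $f(X)\equiv(X-h)^l\pmod p$ for some $h\in\mathbb{Z}$, and comparing coefficients — with the cases $p=l$ and $p\ne l$ treated separately, exactly as in the proof of the Lemma — yields $p\mid\gcd(a_2,\dots,a_{l-1},l\,a_l)$, contradicting the hypothesis. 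Hence every ramified prime $p$ of $L/\mathbb{Q}$ has $e(p)=2$, so by Proposition~\ref{proposition, decomposition form of p in K and L, H. Cohen Adv.}(1) we have $p\mathcal{O}_L=\gamma_1^2\cdots\gamma_l^2$. For any prime $\mathfrak p$ of $E$ above such a $p$ and any prime $\gamma$ of $L$ above $\mathfrak p$, the index $e(\gamma/\mathfrak p)$ divides $e(\gamma/p)=2$; but $L/E$ is Galois of prime degree $l$, so $e(\gamma/\mathfrak p)\in\{1,l\}$, and since $l\ge 3$ this forces $e(\gamma/\mathfrak p)=1$. As primes unramified in $L/\mathbb{Q}$ are a fortiori unramified in $L/E$, we conclude that all finite places of $E$ are unramified in $L$.

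With this established, Corollary~\ref{corollary, if L/E is unramified then Po(E) isomorph. to Po(L)} applies verbatim and gives $\Po(L)\simeq\Po(E)$. I do not anticipate any genuine obstacle: the statement is a repackaging of Lemma~\ref{lemma, necessary and sufficient conditions to class number of E divisible by r} and Corollary~\ref{corollary, if L/E is unramified then Po(E) isomorph. to Po(L)} (which itself rests on Theorem~\ref{theorem, main Theorem}). The only step demanding a little care is the reduction $f(X)\equiv(X-h)^l\pmod p$ and the ensuing coefficient bookkeeping: one must note that the vanishing of the $X^{l-1}$-coefficient of $f$ pins $h$ down modulo $p$ or forces $p\mid l$, and that this is precisely what collapses the remaining coefficient congruences into divisibility of $\gcd(a_2,\dots,a_{l-1},l\,a_l)$ by $p$. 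Since this computation has already been carried out in the proof of the Lemma, nothing new is needed here.
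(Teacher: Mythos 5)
Your proof is correct and takes essentially the same route as the paper, which obtains this corollary precisely by combining Lemma~\ref{lemma, necessary and sufficient conditions to class number of E divisible by r} with Corollary~\ref{corollary, if L/E is unramified then Po(E) isomorph. to Po(L)}; you rightly make explicit that the needed input is the unramifiedness of $L/E$ established in the converse half of the Lemma's \emph{proof} (via $f(X)\equiv(X-h)^l \pmod p$ forcing $p\mid\gcd(a_2,\dots,a_{l-1},l\,a_l)$), rather than the weaker conclusion $l\mid h(E)$ recorded in its statement. No gaps.
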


Considering the decomposition form of a prime $p$ in $L$ and $K$, given in Proposition \eqref{proposition, decomposition form of p in K and L, H. Cohen Adv.}, we find also a result on divisibilty of class numbers.
% Following Masley \cite{J. M. Masley}, we define:
%\begin{definition} (See \cite{J. M. Masley}) We call an extension $M/N$ of number fields \textit{totally ramified} if no subextension of $M/N$ except $N$ itself is unramified over $N$.
%\end{definition}
%\begin{proposition} \cite[Corollary 2.3]{J. M. Masley}  \label{proposition, Masley's result h(N) divides h(M)}
%Suppose an extension $M/N$ of number fields is totally ramified. Then $h(N) | h(M)$.
%\end{proposition}
%In a special case, if $[M:N]$ is a prime number and $M/N$ is not unramified, then $M/N$ is a totally ramified extension. 
%Now, we have:

\begin{proposition} \cite[Proposition 4.4]{NChildress} \label{proposition, Childress's prop.}
If $M/N$ is an extension of number fields, and there is some prime $\mathfrak{p}$ of $\mathcal{O}_N$ that is totally ramified in $M/N$, then $h(N) | h(M)$. 
\end{proposition}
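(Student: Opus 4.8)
The plan is to prove Proposition~\ref{proposition, Childress's prop.}, which states that if $M/N$ is an extension of number fields with some prime $\mathfrak{p}$ of $\mathcal{O}_N$ totally ramified in $M/N$, then $h(N) \mid h(M)$. The key idea is to exhibit an injection $\Cl(N) \hookrightarrow \Cl(M)$, for which the natural candidate is the transfer map $\epsilon_{M/N}$. To show injectivity it suffices, as in the proof of Corollary~\ref{corollary, if h(N) and order of H^1 are relatively prime then Cl(N) is embedded  in Po(M/N)}, to find a one-sided inverse; here the relevant composite will not be a simple power map (since we no longer assume $\gcd(h(N),[M:N])=1$), so instead I would use the totally ramified prime $\mathfrak{p}$ directly.

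First I would set $n=[M:N]$ and let $\mathfrak{P}$ be the unique prime of $\mathcal{O}_M$ above $\mathfrak{p}$, so that $\mathfrak{p}\mathcal{O}_M=\mathfrak{P}^n$ and $N_{M/N}(\mathfrak{P})=\mathfrak{p}$ (residue degree $1$, since $e=n$ forces $f=1$). The plan is then to consider the composite $\mathcal{N}_{M/N}\circ\epsilon_{M/N}\colon\Cl(N)\to\Cl(N)$. On ideals, $N_{M/N}(j_{M/N}(\mathfrak{a}))=N_{M/N}(\mathfrak{a}\mathcal{O}_M)=\mathfrak{a}^{n}$, so this composite is the $n$-th power map on $\Cl(N)$, which in general has nontrivial kernel. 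To get injectivity of $\epsilon_{M/N}$ I would instead argue as follows: suppose $\epsilon_{M/N}(\bar{\mathfrak{a}})$ is trivial in $\Cl(M)$, i.e. $\mathfrak{a}\mathcal{O}_M=(\alpha)$ for some $\alpha\in M^{*}$. Taking norms, $\mathfrak{a}^{n}=(N_{M/N}(\alpha))$ is principal in $N$. Separately, one shows that there is a prime ideal class argument forcing $\mathfrak{a}$ itself to be principal; the cleanest route is to replace $\mathfrak{a}$ by a representative prime $\mathfrak{q}$ of its class coprime to $\mathfrak{p}$ and use the totally ramified $\mathfrak{p}$ via the localization/valuation at $\mathfrak{P}$.

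A more robust implementation, which I would actually write down, is via a valuation-theoretic splitting: since $\mathfrak{p}$ is totally ramified, the $\mathfrak{P}$-adic valuation $v_{\mathfrak{P}}$ on $M^{*}$ restricts to $n\cdot v_{\mathfrak{p}}$ on $N^{*}$ (up to normalization), so the group homomorphism $I(M)\to\mathbb{Z}$, $\mathfrak{b}\mapsto v_{\mathfrak{P}}(\mathfrak{b})$, combined with the norm, lets one detect the class of $\mathfrak{a}$. Concretely: if $\mathfrak{a}\mathcal{O}_M$ is principal, write the class $\bar{\mathfrak{a}}\in\Cl(N)$ and use that $\epsilon_{M/N}$ kills it; then the exact sequence for the ramification of $\mathfrak{p}$ (the prime $\mathfrak{p}$ contributes a cyclic factor $\mathbb{Z}/n\mathbb{Z}$ as in \eqref{equation, exact sequence is obtained by the defined map psi}, and totality of ramification means this is all of $I(M)/I(N)$ in that coordinate) pins down that $\mathfrak{a}$ must already be principal in $N$. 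Alternatively, and perhaps most simply for the write-up, one invokes the standard fact that a totally ramified prime gives a section: the map $\Cl(N)\to\Cl(M)$ is split-injective because one can produce, for each ideal class of $N$, an ideal of $M$ whose norm realizes it while the extension map is compatible.

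The main obstacle is precisely that $\epsilon_{M/N}$ need not be injective in general — the hypothesis of total ramification is doing real work, and I must use it rather than just formal properties of norm and transfer. The cleanest argument avoids $\Cl$ entirely at first: one shows the \emph{ideal}-level norm $N_{M/N}\colon I(M)\to I(N)$ is surjective onto the \emph{group generated by $\mathfrak{p}$ and all ideals already in $I(N)$ up to principal ideals} using $N_{M/N}(\mathfrak{P})=\mathfrak{p}$, deduces the corresponding statement on class groups, and then a counting/divisibility argument (comparing $|\Cl(M)|$ and $|\Cl(N)|$ via the surjectivity of $\mathcal{N}_{M/N}$ on an appropriate subgroup together with a kernel estimate) yields $h(N)\mid h(M)$. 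I expect the delicate point in the write-up to be making the "$\mathfrak{p}$ together with $I(N)$ generates enough of $I(M)$ modulo principal ideals" step fully rigorous; but since Proposition~\ref{proposition, Childress's prop.} is quoted verbatim from \cite{NChildress}, I would in the paper simply cite it and defer the details there, sketching only the $\epsilon_{M/N}$-is-split-injective idea for the reader's benefit.
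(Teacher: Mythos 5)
The paper offers no proof of this statement; it is imported verbatim from Childress. Your sketch, however, is built around a claim that is false: the extension map $\epsilon_{M/N}\colon \Cl(N)\to\Cl(M)$ need \emph{not} be injective (let alone split-injective) when some prime of $N$ is totally ramified in $M/N$. Concretely, take $N=\mathbb{Q}(\sqrt{-5})$ and $M=\mathbb{Q}(\sqrt{-5},\sqrt{2})$. The prime $\mathfrak{p}_2=(2,1+\sqrt{-5})$ is the unique prime of $N$ above $2$ and is totally ramified in the quadratic extension $M/N$ (the inertia group of $2$ in $M/\mathbb{Q}$ is the full Klein group, since $2$ ramifies in all three quadratic subfields), yet $\mathfrak{p}_2\mathcal{O}_M=(\sqrt{2})$ is principal while $[\mathfrak{p}_2]$ generates $\Cl(N)\simeq\mathbb{Z}/2\mathbb{Z}$. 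So $\epsilon_{M/N}$ kills all of $\Cl(N)$ here; capitulation is entirely compatible with total ramification, and no amount of valuation-theoretic bookkeeping at $\mathfrak{P}$ will rescue the injectivity of $\epsilon_{M/N}$. Your fallback computation $\mathcal{N}_{M/N}\circ\epsilon_{M/N}=(\,\cdot\,)^{[M:N]}$ is correct but, as you note, proves nothing without the coprimality hypothesis.

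The actual mechanism (and Childress's proof) runs in the opposite direction: one shows the norm map $\mathcal{N}_{M/N}\colon\Cl(M)\to\Cl(N)$ is \emph{surjective}, whence $h(N)\mid h(M)$. This follows from the Hilbert class field: $H(N)/N$ is everywhere unramified, while $\mathfrak{p}$ is totally ramified in $M/N$, so $\mathfrak{p}$ is simultaneously totally ramified and unramified in the subextension $(M\cap H(N))/N$, forcing $M\cap H(N)=N$. Hence $[MH(N):M]=[H(N):N]=h(N)$, and since $MH(N)/M$ is abelian and unramified it sits inside $H(M)$, giving $h(N)\mid h(M)$ (equivalently, by Galois theory and Artin reciprocity, $\mathcal{N}_{M/N}(\Cl(M))$ corresponds to $\Gal(H(N)/M\cap H(N))=\Gal(H(N)/N)$, i.e.\ the norm is onto). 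Your closing paragraph gestures at ``surjectivity of $\mathcal{N}_{M/N}$ \dots together with a kernel estimate,'' which is the right neighborhood, but it is not developed, and the argument you say you would actually write down (split-injectivity of $\epsilon_{M/N}$) is the one that fails. Since the paper simply cites the result, deferring to the reference is fine; the sketch you propose to include alongside the citation is not.
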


\begin{corollary}  \label{corollary, h(K) divides h(L)}
Let $K$ be a non-Galois number field of degree $l$, for $l$ an odd prime. Let $L$ be the Galois closure of $K$ over $\mathbb{Q}$ with $\Gal(L/\mathbb{Q}) \simeq D_l$. Then $h(K) | h(L)$. In particular, if  $h(L)=1$, then both subfields $K$ and the unique quadratic subfield $E$ of $L$, are P\'olya.
\end{corollary}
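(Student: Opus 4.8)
The plan is to exhibit a prime of $\mathcal{O}_K$ that is totally ramified in $L/K$ and then invoke Proposition \eqref{proposition, Childress's prop.}. Since $[L:K]=2$, ``totally ramified'' here is the same as ``ramified'', so it suffices to find one prime of $K$ ramifying in $L/K$. The key point is that the right prime to look at comes from the quadratic subfield $E$: as $E\neq\mathbb{Q}$, some rational prime $q$ ramifies in $E/\mathbb{Q}$ (for instance, any prime divisor of $\disc(E)$), so its ramification index in $E/\mathbb{Q}$ is $2$. Because $E\subseteq L$ and $L/\mathbb{Q}$ is Galois, the ramification index $e(q)$ of $q$ in $L/\mathbb{Q}$ is even, and since $l$ is odd, Proposition \eqref{proposition, decomposition form of p in K and L, H. Cohen Adv.} leaves only $e(q)=2$ (case (1)) or $e(q)=2l$ (case (3)) — case (2) cannot occur for this $q$.

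I would then treat the two cases. In case (1), with $q\mathcal{O}_K=\beta_1\beta_2^2\cdots\beta_{(l+1)/2}^2$ and $q\mathcal{O}_L=(\gamma_1\cdots\gamma_l)^2$, the prime $\beta_1$ is unramified over $q$ while any prime $\gamma$ of $L$ above it still satisfies $e(\gamma/q)=2$; multiplicativity of ramification indices in the tower $\mathbb{Q}\subseteq K\subseteq L$ gives $e(\gamma/\beta_1)=2=[L:K]$, so $\beta_1$ is totally ramified in $L/K$. In case (3) one has $q=l$ totally ramified in $L/\mathbb{Q}$, hence in every subextension; equivalently, for the unique prime $\beta$ of $K$ above $l$ and the unique $\gamma$ of $L$ above $\beta$, $e(\gamma/\beta)=2l/l=2=[L:K]$. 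In either case Proposition \eqref{proposition, Childress's prop.} yields $h(K)\mid h(L)$.

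For the ``in particular'' assertion, assume $h(L)=1$. Then $h(K)\mid h(L)=1$ forces $h(K)=1$, so $\Po(K)\subseteq\Cl(K)=\{0\}$ and $K$ is P\'olya. Likewise $\Po(L)\subseteq\Cl(L)=\{0\}$, and substituting $\Po(L)=\{0\}$ into the exact sequence \eqref{equation, exact sequence for Polya groups E, K and L} of Theorem \eqref{theorem, main Theorem} shows that $\Po(E)$ embeds into $\{0\}$, so $E$ is P\'olya as well. I do not expect a genuine obstacle here; the single point that really needs care is the exclusion of case (2) of Proposition \eqref{proposition, decomposition form of p in K and L, H. Cohen Adv.}, which is exactly why one must choose $q$ dividing $\disc(E)$ rather than an arbitrary ramified prime of $L$ — a type-(2) prime of $L$ is in general unramified in $L/K$ and would not help. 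It is also worth stressing that ``$L$ P\'olya'' alone does \emph{not} give ``$K$ and $E$ P\'olya'' (subfields of P\'olya fields need not be P\'olya), so the argument genuinely relies on the class-number divisibility together with the structure-specific exact sequence of Theorem \eqref{theorem, main Theorem}.
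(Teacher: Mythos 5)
Your proposal is correct and follows essentially the same route as the paper: pick a prime ramified in $E/\mathbb{Q}$, use Proposition \eqref{proposition, decomposition form of p in K and L, H. Cohen Adv.} to see that in either case ($e=2$ or $e=2l$) some prime of $K$ above it is (totally) ramified in the quadratic extension $L/K$, apply Proposition \eqref{proposition, Childress's prop.}, and deduce the P\'olya-ness of $K$ from $h(K)=1$ and of $E$ from the injection $\Po(E)\hookrightarrow\Po(L)$ in Theorem \eqref{theorem, main Theorem}. Your explicit exclusion of case (2) and the verification $e(\gamma/\beta_1)=2$ just spell out details the paper leaves implicit.
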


\begin{proof}
Let $p$ be a ramified prime in the extension $E/\mathbb{Q}$. Hence $2 \mid e(p)$, where $e(p)$ denotes the ramification index of $p$ in $L/\mathbb{Q}$.
If $e(p)=2$, by Proposition \eqref{proposition, decomposition form of p in K and L, H. Cohen Adv.}, $p$ has the decomposition form in $K$ as follows:
\begin{equation} 
p \mathcal{O}_K=\beta_1 \beta_2^2 \dots \beta_{\frac{l+1}{2}}^2,
\end{equation}
which implies that $\beta_1$ ramifies in $L/K$. Similarly, if $e(p)=2l$, then by Proposition \eqref{proposition, decomposition form of p in K and L, H. Cohen Adv.}, $p=l$ and in this case the only prime ideal $\beta$ of $K$ above $l$ is ramified in $L/K$.
Hence by Proposition \eqref{proposition, Childress's prop.}, $h(K) | h(L)$. The second assertion follows from Theorem \eqref{theorem, main Theorem}.
\end{proof}

\vspace*{0.3cm}

At the end of this section, it might be appropriate to have some numerical examples. To find $D_5$-extensions,  we use \textit{Brumer's generic polynomial} (see \cite[Definition 0.1.1]{C. U. Jensen's book}):

\vspace*{0.1cm}

\begin{proposition} \cite[Theorem 2.3.5]{C. U. Jensen's book} \label{Brumer's theorem}
Let $M$ be an arbitrary field. The polynomial 
\begin{equation} \label{generic polynomial for D_5-extension}
f(s,t,X)=X^5+(t-3)X^4+(s-t+3)X^3+(t^2-t-2s-1)X^2+sX+t 
\end{equation}
in $M(s,t)[X]$ is then generic for $D_5$-extensions over $M$.

Also the quadratic subextension (in chracteristic $\neq 2$) of the splitting field of the polynomial $f(s,t, X)$ is obtained by adjoining to $M(s,t)$ a square root of 
\begin{equation*}
-(4t^5-4t^4-24st^3-40t^3-s^2t^2+34st^2+91t^2+30s^2t+14st-4t-s^2+4s^3).
\end{equation*}
\end{proposition}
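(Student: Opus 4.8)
This Proposition is quoted from \cite[Theorem 2.3.5]{C. U. Jensen's book}, so the plan is to reconstruct Brumer's argument, which splits into three tasks: (a) showing that $f(s,t,X)$ has Galois group $D_5$ over the rational function field $M(s,t)$; (b) establishing genericity, i.e. that every $D_l$-extension with $l=5$ of every field containing $M$ is obtained by specialising $(s,t)$; and (c) identifying the quadratic subextension explicitly.

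For (a), first I would check that $f(s,t,X)$ is irreducible over $M(s,t)$; since $s$ and $t$ are algebraically independent indeterminates this follows by specialising $t$ and reducing modulo a suitable prime, so $\Gal(f/M(s,t))$ is a transitive subgroup of $S_5$. Next I would show this group has order $10$. Since $D_5 \subset A_5$ (a reflection of the regular pentagon is a product of two transpositions, hence even), a necessary consistency check is that $\disc(f) \in M(s,t)^{\times 2}$, which I would verify directly from the coefficients in \eqref{generic polynomial for D_5-extension}; one then rules out $A_5$, $S_5$ and the Frobenius group of order $20$ by examining the classical resolvents of the quintic (the sextic resolvent and the Cayley--Weber resolvent). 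As the transitive subgroups of $S_5$ of order $10$ form a single conjugacy class, namely $D_5$, this pins down the Galois group.

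For (b), the cleanest conceptual route is via essential dimension: over a field $M$ with $\mathrm{char}\,M \neq 2,5$ the group $D_5$ has a faithful two-dimensional reflection representation, so $\mathrm{ed}_M(D_5)\le 2$, making a two-parameter versal family plausible. To turn this into a genuine genericity statement one argues directly: given a $D_5$-extension $F/N$ with $N \supseteq M$, with quadratic subfield $N(\sqrt d)$ and the cyclic quintic subfield fixed by $C_5 \trianglelefteq D_5$, one must produce $(s_0,t_0)\in N^2$ whose specialised polynomial has splitting field $F$. This amounts to inverting Brumer's normal form, using the Kummer-/Lagrange-resolvent description of $C_5$-extensions of $N(\sqrt d)$ that are ``twisted'' by the quadratic action, and it is precisely here that the specific shape of \eqref{generic polynomial for D_5-extension} does its work. \textbf{I expect this step to be the main obstacle}, since it is the part that genuinely uses the normalisation rather than just formal properties of $D_5$.

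For (c), note that $D_5$ has a unique subgroup of index $2$, namely the rotation subgroup $C_5$, so the splitting field of $f(s,t,X)$ has a unique quadratic subextension, equal to the fixed field of $C_5$; because $D_5 \subset A_5$ this is \emph{not} $M(s,t)(\sqrt{\disc f})$. Instead I would order the roots $x_1,\dots,x_5$ cyclically by a generator of $C_5$ and form a resolvent $\Theta$ (a $\mathbb{Z}$-linear combination of products of the roots) that is fixed by $C_5$ but negated by the reflections; subtracting the $D_5$-average arranges $\Theta \mapsto -\Theta$ under the nontrivial coset, so $\Theta^2 \in M(s,t)$ and $M(s,t)(\Theta)$ is the desired quadratic field. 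The remaining work is the symmetric-function computation expressing $\Theta^2$ in terms of the coefficients of $f$, hence in $s$ and $t$; this is a routine but lengthy resultant elimination, best carried out on a computer, and its output is, up to a square factor in $M(s,t)^{\times}$, exactly
\[
-(4t^5-4t^4-24st^3-40t^3-s^2t^2+34st^2+91t^2+30s^2t+14st-4t-s^2+4s^3).
\]
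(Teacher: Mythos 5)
The paper offers no argument for this proposition at all: it is imported verbatim from \cite[Theorem 2.3.5]{C. U. Jensen's book}, so there is no internal proof to compare yours against. Judged on its own terms, your reconstruction has a genuine gap precisely where you flag one: part (b), genericity, is the substance of the theorem and is not actually proved. Saying that one must ``invert Brumer's normal form'' via Kummer/Lagrange resolvents is a statement of the problem, not a solution; note moreover that a $C_5$-extension of $N(\sqrt d)$ is not a Kummer extension unless $\zeta_5\in N(\sqrt d)$, so your route would require a further descent from $N(\sqrt d,\zeta_5)$, and the essential-dimension remark only bounds the number of parameters --- $\mathrm{ed}_M(D_5)\le 2$ does not by itself produce a two-parameter \emph{generic} polynomial (generic dimension can exceed essential dimension in general). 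The actual proof in Jensen--Ledet--Yui runs in the opposite direction: one lets $D_5$ act faithfully on the rational function field $M(u,v)$ via the period-five Lyness map $\sigma\colon u\mapsto v\mapsto (1+v)/u$ together with the swap $\tau\colon u\leftrightarrow v$, proves that the fixed field $M(u,v)^{D_5}$ is again rational, equal to $M(s,t)$ for explicit invariants $s,t$, and exhibits $f(s,t,X)$ as the minimal polynomial of a generator of the index-$5$ intermediate field $M(u,v)^{\langle\tau\rangle}$. Both the identification of the Galois group and the genericity (every $D_5$-extension of a field containing $M$ contains a pair $u,v$ satisfying the Lyness relations, hence is a specialisation) then come out of this single construction, and the quadratic subextension is read off as the fixed field of $\langle\sigma\rangle$.

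Two smaller problems in your part (a): since $M$ is an \emph{arbitrary} field, ``specialising $t$ and reducing modulo a suitable prime'' is not available (take $M=\mathbb{C}$ or $M=\mathbb{F}_p$); irreducibility over $M(s,t)$ should instead be argued via the degree-one dependence of $f$ on $s$ and Gauss's lemma, or again via the function-field construction. And once you know $\disc(f)$ is a square, the groups $S_5$ and the Frobenius group of order $20$ are already excluded (neither lies in $A_5$), so the resolvents are only needed to rule out $A_5$ and $C_5$; ruling out $C_5$ amounts to showing the displayed quantity in the second assertion is not a square in $M(s,t)$, which your sketch does not address. Your part (c) --- a $C_5$-invariant, reflection-anti-invariant resolvent $\Theta$ with $\Theta^2\in M(s,t)$, and the correct observation that the quadratic subfield is \emph{not} $M(s,t)(\sqrt{\disc f})$ --- is a sound outline, and ``up to a nonzero square factor'' is harmless there since any such factor generates the same quadratic extension.
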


%Now we can use the generic polynomail in Proposition \eqref{Brumer's theorem}, and find some examples of P\'olya (resp. non-P\'olya) $D_5$-extensions of $\mathbb{Q}$.

\vspace*{0.3cm}

\begin{example} \label{example, examples of D_5-extension of Q}
Consider the notations of Proposition \eqref{Brumer's theorem}.

\begin{itemize}

\item[(a)]
Let $s=5$ and $t=1$ and $K=\mathbb{Q}(\theta)$ where $\theta$ is a root of the polynomial
\begin{equation*}
f(X)=X^5-2X^4+7X^3-11X^2+5X+1.
\end{equation*}
We have $D_K=1367^2$ and $K$ is a $D_5$-field. Denote the Galois closure of $K$ over $\mathbb{Q}$ by $L$. By Proposition \eqref{Brumer's theorem}, $E=\mathbb{Q}(\sqrt{-1367})$ is the unique quadratic subfield of $L$ and by Proposition \eqref{proposition, Zantema's result for quadratic Polya field}, $E$ is P\'olya. Since $h(K)=4$, By 
Theorem \eqref{theorem, main Theorem}, $L$ is a P\'olya $D_5$-extension of $\mathbb{Q}$, while by \cite[Theorem 6.9]{zantema} $K$ is not P\'olya.
%% see Remark \eqref{remark, Zantema's result about composite of Galois Polya fields}.

\item[(b)]
for $s=-5$ and $t=3$ the splitting field $L$ of the polynomial 
\begin{equation*}
f(X)=X^5 - 5X^3 + 15X^2 - 5X+ 3.
\end{equation*}
over $\mathbb{Q}$ is a $D_5$-extension with the uniuqe quadratic subfield $E=\mathbb{Q}(\sqrt{-15})$. We have $D_K=3^2.5^6$, $h(K)=1$, where $K=\mathbb{Q}(\theta)$ for some root $\theta$ of $f(X)$. Using Proposition \eqref{proposition, Zantema's result for quadratic Polya field} and Theorem \eqref{theorem, main Theorem}, we have $\Po(L) \simeq \Po(E) \simeq C_2$.
%see Remark \eqref{remark, Zantema's result about composite of Galois Polya fields}.

%\item[(c)]
%let $s=-4$, $t=-1$ and $K=\mathbb{Q}(\theta)$, where $\theta$ is a root of the polynomial 
%\begin{equation*}
%f(X)=X^5 - 4X^4 + 9X^2 - 4X - 1.
%\end{equation*}
%We have $D_K=19^2.43^2$, and by Proposition \eqref{Brumer's theorem}, the Galois closure $L$ of $K$ is a $D_5$-extension of $\mathbb{Q}$, with the unique quadratic subfield $E=\mathbb{Q}(\sqrt{19.43})$. By Proposition \eqref{proposition, Zantema's result for quadratic Polya field}, $E$ is P\'olya. Since $h(K)=1$, 
%by Corollary \eqref{corollary, if E and K are Polya then so is L}, $L$ is a (real)  P\'olya $D_5$-extension of $\mathbb{Q}$.

\end{itemize}
\end{example}
%With the notations of Proposition \eqref{Brumer's theorem}, 
%

%\begin{example} \label{example, second example of D_5-extension of Q}
%Using Proposition \eqref{Brumer's theorem}, we find 
%\end{example}

%\begin{example} \label{example, third example of D_5-extension of Q}
%With the notations of Proposition \eqref{Brumer's theorem}, 
%\end{example}

We recall that for a $D_l$-extension $L$ of $\mathbb{Q}$ with quadratic subfield $E$, if $L/E$ is unramified then $\Po(E) \simeq \Po(L)$, see Corollary \eqref{corollary, if L/E is unramified then Po(E) isomorph. to Po(L)}. 
%As we see in the last example for $t=1$, $L/E$ is unramified. More generally (for $t=1$):

\begin{proposition} \cite[Section 2]{M. J. Lavallee} \label{Lavallee's criterion}
For $s \in \mathbb{Z}$ let 
\begin{equation} \label{Lavallee's polynomial f_s(x)}
f_s(X)=X^5-2X^4+(s+2)X^3-(2s+1)X^2+sX+1.
\end{equation}
Then, $f_s(X)$ is irreducible over $\mathbb{Q}$, $\disc(f_s(X)) = (4s^3+28s^2+24s+47)^2$ and if $-(4s^3+28s^2+24s+47)$ is not a square, then $\Gal(f(X))\simeq D_5$. Moreover, for $\Gal(f(X))\simeq D_5$
the splitting field of $f(X)$ (over $\mathbb{Q}$) is unramified over its unique quadratic subfield, namely over $\mathbb{Q}(\sqrt{-4s^3-28s^2-24s-47})$.
\end{proposition}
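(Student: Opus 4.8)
The plan is to establish the four assertions in turn. For \emph{irreducibility}, I would reduce modulo $2$: $f_s(X)\equiv X^5+sX^3+X^2+sX+1\pmod 2$, which is $X^5+X^2+1$ when $s$ is even and $X^5+X^3+X^2+X+1$ when $s$ is odd. In each case a short check shows there is no root in $\mathbb{F}_2$ and that the unique irreducible quadratic $X^2+X+1\in\mathbb{F}_2[X]$ is not a factor, so each reduction is irreducible over $\mathbb{F}_2$; hence $f_s$ is irreducible over $\mathbb{Q}$ for every $s\in\mathbb{Z}$.

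For the \emph{discriminant}, since $\disc(f_s)=(-1)^{10}\operatorname{Res}(f_s,f_s')$ and the coefficients of $f_s$ are affine in $s$, the resultant is a polynomial in $s$ (expanded from the Sylvester matrix, or computed as $\operatorname{N}_{K/\mathbb{Q}}(f_s'(\theta))$ with $\theta$ a root), and one checks it equals $(4s^3+28s^2+24s+47)^2$; in particular it is a perfect square.

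For the \emph{Galois group} $G:=\Gal(f_s)$: the square discriminant forces $G\subseteq A_5$, irreducibility forces $G$ transitive, so $G\in\{C_5,D_5,A_5\}$. Next I would compute the degree-$6$ resolvent of $f_s$ attached to the Frobenius group $F_{20}\subset S_5$ and check that it splits off a linear factor over $\mathbb{Q}[s]$; this puts $G$ inside a conjugate of $F_{20}$, hence inside $A_5\cap F_{20}=D_5$, so $G\in\{C_5,D_5\}$. Finally the quadratic resolvent associated with that rational root is a polynomial $\rho$ in the roots of $f_s$, lying in the splitting field $L$, with $\rho^2=-(4s^3+28s^2+24s+47)\in\mathbb{Q}$; so if $-(4s^3+28s^2+24s+47)$ is not a square, then $\mathbb{Q}(\rho)\subseteq L$ is a genuine quadratic subfield, which excludes $G\simeq C_5$. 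Hence $G\simeq D_5$, with unique quadratic subfield $E=\mathbb{Q}(\rho)=\mathbb{Q}\bigl(\sqrt{-4s^3-28s^2-24s-47}\bigr)$.

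For \emph{unramifiedness of $L/E$}, write $K=\mathbb{Q}(\theta)$ with $\theta$ a root of $f_s$, so $L$ is the Galois closure of $K$ and $\Gal(L/E)\simeq C_5$. By Proposition \ref{proposition, decomposition form of p in K and L, H. Cohen Adv.} a rational prime ramifying in $L$ has ramification index $2$ unless it is totally ramified in $K$, and, exactly as in the proof of Lemma \ref{lemma, necessary and sufficient conditions to class number of E divisible by r}, ``$L/E$ unramified'' is equivalent to ``no rational prime is totally ramified in $K/\mathbb{Q}$''. So I would argue: if $p$ is totally ramified in $K$, then $\mathcal{O}_K/p\mathcal{O}_K$ is local with residue field $\mathbb{F}_p$, so multiplication by $\theta$ on it has a single eigenvalue $a\in\mathbb{F}_p$; since this operator has characteristic polynomial $f_s\bmod p$ (as $\mathcal{O}_K$ is $\mathbb{Z}$-free of rank $5$), we get $f_s(X)\equiv(X-a)^5\pmod p$, and comparing the coefficients of $X^4$ and $X^0$ yields $5a\equiv2$ and $a^5\equiv-1\pmod p$, whence $p\neq5$ and $p\mid 2^5+5^5=3157=7\cdot11\cdot41$. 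The remaining coefficient comparisons then pin down $s\bmod p$, reducing matters to a finite check at $p\in\{7,11,41\}$, supplemented by Dedekind's criterion where $\mathbb{Z}[\theta]$ fails to be maximal. The cleanest way to see what is really at stake here: the conductor--discriminant formula for $D_5$ (irreducible characters $\mathbf{1}$, the quadratic character cut out by $E$, and two $2$-dimensional $\rho_1,\rho_2$, with $5$-point permutation character $\mathbf{1}+\rho_1+\rho_2$) gives $|d_L|=|d_E|\cdot|d_K|^2$, while the discriminant tower for $\mathbb{Q}\subseteq E\subseteq L$ gives $|d_L|=|d_E|^5\cdot\operatorname{N}(\mathfrak{d}_{L/E})$; comparing, $L/E$ is unramified iff $|d_K|=|d_E|^2$, i.e. iff $[\mathcal{O}_K:\mathbb{Z}[\theta]]$ is exactly the square part of $4s^3+28s^2+24s+47$ (using $4s^3+28s^2+24s+47\equiv3\pmod4$ to read off $d_E$). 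I expect this final local analysis — equivalently, ruling out a totally ramified prime in $K$ — to be the main obstacle; everything preceding it is routine bookkeeping.
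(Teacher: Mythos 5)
The paper offers no proof of this Proposition at all---it is quoted from Lavall\'ee--Spearman--Williams--Yang with a bare citation---so your attempt can only be measured against the statement itself. Your first three steps are essentially sound: the reduction modulo $2$ gives $X^5+X^2+1$ or $X^5+X^3+X^2+X+1$ according to the parity of $s$, both irreducible over $\mathbb{F}_2$, so irreducibility holds for every $s$; the discriminant and the sextic $F_{20}$-resolvent are genuinely finite computations (though you do not carry them out, nor do you exhibit the element $\rho$ with $\rho^2=-(4s^3+28s^2+24s+47)$, whose existence is the real content of identifying the quadratic subfield); and the parity argument excluding $C_5$ when that quantity is not a square is correct, since a degree-$5$ field has no quadratic subfield.

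The genuine gap is exactly the one you flag yourself: the unramifiedness of $L/E$ is never proved, and your own local analysis, pushed one step further, shows that the assertion as transcribed is too strong. Your congruences give $5a\equiv 2$ and $a^5\equiv -1\pmod p$, hence $p\neq 5$ and $p\mid 2^5+5^5=7\cdot 11\cdot 41$; but comparing the $X^3$ and $X$ coefficients forces $s\equiv -2\cdot 5^{-1}$ and $s\equiv 2^4\cdot 5^{-3}\pmod p$, whose compatibility requires $p\mid 330$, so only $p=11$, with $a\equiv 7$ and $s\equiv 4\pmod{11}$, survives. Now take $s=4$: then $f_4\equiv (X-7)^5\pmod{11}$ and $f_4(7)=13651=11\cdot 1241$ with $11\nmid 1241$, so Dedekind's criterion shows $11\nmid[\mathcal{O}_K:\mathbb{Z}[\theta]]$ and $11$ is totally ramified in $K$; consequently $L$ ramifies above $11$ over $E=\mathbb{Q}(\sqrt{-847})=\mathbb{Q}(\sqrt{-7})$, although $-847$ is not a square. (This is consistent with Example \eqref{example, some examples of Polya and non-Polya D_5-extensions of Lavallee's polynomials f_s(x)}, where $s=4$ is listed as non-P\'olya even though $\mathbb{Q}(\sqrt{-7})$ is P\'olya.) The missing hypothesis is that $4s^3+28s^2+24s+47$ be squarefree, as in the cited source; under it your own framework closes the argument in three lines: a totally ramified prime $p$ of $K$ satisfies $p\neq 5$, hence is tame and contributes $v_p(D_K)=4$, while $D_K$ divides $\disc(f_s)=(4s^3+28s^2+24s+47)^2$, in which $p$ occurs to the power at most $2$---a contradiction; so by Proposition \eqref{proposition, decomposition form of p in K and L, H. Cohen Adv.} every ramified prime has $e(p)=2$ and $L/E$ is unramified.
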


%With the notations of Proposition \eqref{Lavallee's criterion}, we can characterize all P\'olya splitting fields of $f_s(X)$: 

%\begin{corollary} \label{corollary, Polya-ness of D_5-extensions for Lavallee's polynomials f_s(x)}
%With the assumptions in Proposition \eqref{Lavallee's criterion}, let $L$ be the splitting field of a polynomial $f_s(X)$ over $\mathbb{Q}$, for some integer $s$ with $\Gal(L/\mathbb{Q})\simeq D_5$. Then $\Po(L) \simeq \Po(\mathbb{Q}(\sqrt{d}))$, where $d=-4s^3-28s^2-24s-47$. 
%\end{corollary}

%\begin{proof}
% Denote the quadratic subfield of $L$ by $E$. By Proposition \eqref{Lavallee's criterion}, $L/E$ is unramified,
% and using Corollary \eqref{corollary, if L/E is unramified then Po(E) isomorph. to Po(L)}, the statement is proved.
%\end{proof}

\begin{example} \label{example, some examples of Polya and non-Polya D_5-extensions of Lavallee's polynomials f_s(x)}
Let $f_s(X)$ be given by equation \eqref{Lavallee's polynomial f_s(x)}. Using Corollary \eqref{corollary, if L/E is unramified then Po(E) isomorph. to Po(L)} one can easily check that the splitting field of $f_s(X)$ for  $s\in\{ \pm6,\pm5,\pm2,\pm1,0,8 \}$
%\begin{equation*}
%s\in\{ \pm6,\pm5,\pm2,\pm1,0,8,12,13,19 \}
%\end{equation*}
is a P\'olya $D_5$-extension of $\mathbb{Q}$, while for $s \in\{ \pm17,\pm16,\pm4,-3,7,9,10 \}$ is not.
\end{example}

%\subsection{$D_7$-extensions of $\mathbb{Q}$}
%Let $K$ be a field of characteristic $0$. Following \cite{C. U. Jensen's book}, we restate a criterion for characterizing irreducible septimic polynomials $f(X) \in K[X]$ with $\Gal(f/\mathbb{Q}))\simeq D_7$:
%\begin{proposition} \cite[Lemma 2.5.1 and Theorem 2.5.3]{C. U. Jensen's book} 
%Let $K$ be a field of characteristic $0$, $f(X) \in K[X]$ be an irreducible polynomial of degree $7$, and $\alpha_1,\alpha_2,\dots,\alpha_7$ be roots of $f(X)$ in the its splitting field over $K$. Then the resolvent polynomial 
%\begin{equation*}
%P_{35}(X) :=\prod_{1 \leq i <j< k \leq 7} (X-(\alpha_i+\alpha_j+\alpha_k))
%\end{equation*}
%has distinct roots, and $\Gal(f/\mathbb{Q}))\simeq D_7$ if and only if $P_{35}(X)$ factors into a product of four distinct irreducible polynomials of degrees $14$, $7$, $7$ and $7$ over $K$.
%\end{proposition}

Following \cite{C. U. Jensen's book} we give also an example of a P\'olya $D_7$-extension of $\mathbb{Q}$:
% (See also \cite[Chapter 7, Example (2), page 172]{C. U. Jensen's book}):

 \begin{example} \label{example, first example of $D_7$-extension of Q}
Let $K=\mathbb{Q}(\alpha)$ where $\alpha$ is a root of
\begin{equation*}
f(X)=X^7-7X^6-7X^5-7X^4-1.
\end{equation*}
 Then $f(X)$ is irreducible over $\mathbb{Q}$ and the splitting field $L$ of $f(X)$ (over $\mathbb{Q}$) has Galois group isomorphic to $D_7$, see \cite[Section 5.2, Example 5]{C. U. Jensen's book}. 
 %Denote the Galois closure of $K$ over $\mathbb{Q}$ by $L$. 
Since $\disc(f(X))=-3^6.7^9$, $E=\mathbb{Q}(\sqrt{-7})$ is the quadratic subfield of $L$, which is  P\'olya by Proposition \eqref{proposition, Zantema's result for quadratic Polya field}. Also  $h(K)=1$, and by Theorem  \eqref{theorem, main Theorem} $L$ is P\'olya. 
%Note that $h(L)=7$, see Remark \eqref{remark, if (h(K),[K:Q])=1, then K is Polya but not conversely}.
\end{example}

%\begin{example}  \label{example, second example of $D_7$-extension of Q}
%Let
%\begin{align*}
% f(X)=X^7-X^6-X^5+X^4-X^3-X^2+2X+1.
 %\end{align*}
%Then $f(X)$ is irreducible over $\mathbb{Q}$ and $\Gal(f/\mathbb{Q}) \simeq D_7$, see \cite[Chapter 7, Example 2, page 172]{C. U. Jensen's book}. Denote the splitting field of $f(X)$ over $\mathbb{Q}$ by $L$. We have $\disc(f(X))=-71^3$, hence $E=\mathbb{Q}(\sqrt{-71})$ is the unique quadratic subfield of $L$, which is P\'olya by Proposition \eqref{proposition, Zantema's result for quadratic Polya field}. By Proposition \eqref{proposition, decomposition form of p in K and L, H. Cohen Adv.}, we have $e(71)=2$ and by Theorem \eqref{theorem, main Theorem}, $L$ is also P\'olya.
% \end{example} 
  
\begin{remark}
In \cite[Section 7.3]{C. U. Jensen's book}, using singular values of certain modular functions, a method for finding dihedral extensions of $\mathbb{Q}$ and some algorithms for constructing the Hilbert class field of an imaginary quadratic field are given. In particular, using Corollary \eqref{corollary, if L/E is unramified then Po(E) isomorph. to Po(L)}, one can see that examples (2) and (3) of \cite[Section 7.3]{C. U. Jensen's book} are respectively $D_{13}$ and $D_{19}$-extensions of $\mathbb{Q}$. This can also be done by Leriche's result \cite[Corollary 3.2]{Leriche 2014}.
\end{remark}
 
%\begin{example} \label{example, an example of D_13-extension of Q}
%Let $L$ be the splitting field of the polynomial
%\begin{equation*}
%X^{13}-6X^{12}+10X^{11}-16X^{10}+22X^9-19X^8+11X^7-5X^6-X^5+5X^4-4X^3+2X-1
%\end{equation*}
%over $\mathbb{Q}$. Then $\Gal(L/\mathbb{Q}) \simeq D_{13}$, $E=\mathbb{Q}(\sqrt{-191})$ is the unique quadratic subfield of $L$, and  $L$ is the Hilbert class field of $E$, see \cite[Section 7.3, Example 3, page 184]{C. U. Jensen's book}. By Proposition \eqref{proposition, Zantema's result for quadratic Polya field}, $E$ is P\'olya and by Corollary \eqref{corollary, if L/E is unramified then Po(E) isomorph. to Po(L)}, so is $L$.
%\end{example}

%\begin{example} \label{example, an example of D_19-extension of Q}
%Let $L$ be the splitting field of the polynomial
%\begin{align*}
%&X^{19}-14X^{18}+59X^{17}-113X^{16}+91X^{15}+19X^{14}-90X^{13}+51X^{12}+2X^{11}\\
%&-5X^{10}+9X^9-30X^8+22X^7+7X^6-14X^5+3X^4+2X^3-2X^2+2X-1
%\end{align*}
%%over $\mathbb{Q}$. Then $\Gal(L/\mathbb{Q}) \simeq D_{19}$, $E=\mathbb{Q}(\sqrt{-359})$ is the unique %quadratic 	subfield of $L$, and  $L$ is the Hilbert class field of $E$, see \cite[Section 7.3, Example 3, page 184]{C. U. Jensen's book}. By an argument similar to Example \eqref{example, an example of D_13-extension of Q}, $L$ is P\'olya.
%\end{example}

%\begin{remark} 
%Since $L$ is the Hilbert class field of $E$ in both Examples \eqref{example, an example of D_13-extension of Q} and Example \eqref{example, an example of D_19-extension of Q}, by Leriche's result too, $L$ is P\'olya.
%%\end{remark}

\section{Upper bound for the number of ramification} \label{section of Maximum number of ramification}
We again recall that $l$ is an odd prime number.
For a number field $M$, denote the number of ramified primes in $M/\mathbb{Q}$ by $s_M$. 
Leriche \cite{Leriche 2013}, for any  Galois P\'olya number field $M$ gave an upper bound for $s_M$ which only depends on the degree of $M$ over $\mathbb{Q}$, see  \cite[Proposition 2.5]{Leriche 2013}. In particular,
for a P\'olya Galois number field $M$ of degree $2l$, this upper bound is given by $s_M\leq l+4$. %(We recall that $l$ is an odd prime number.)

For a cyclic number field $M$ of degree $2l$, using Zantema's results \cite[Proposition 3.2 and Theorem 3.4]{zantema} the upper bound can be made sharp by $s_M \leq 3$.

%In \cite{zantema}, Zantema proved that a cyclic number field $M$ of degree a power of $l$, is P\'olya if and only if $M/\mathbb{Q}$ is ramified at only one prime, and in a P\'olya quadratic field at most two primes are ramified, see \cite[Proposition 3.2]{zantema}. He also proved that for two Galois number fields $K_1$ and $K_2$ of coprime degrees over $\mathbb{Q}$, their compositum is P\'olya if and only if $K_1$ and $K_2$ are P\'olya, see \cite[Theorem 3.4]{zantema}. Hence for a cyclic number field $M$ of degree $2l$, using Zantema's results the upper bound can be made sharp by $s_M \leq 3$.

In \cite{Maarefparvar-Rajaei}, we proved that for a non-Galois cubic field $K$ with Galois closure $L$, if $L$ is P\'olya depending on whether $D_K>0$, $D_K<0$ and $K$ pure, or $D_K<0$ and $K$ non-pure, then $s_L \leq 4$, $s_L \leq 3$ or $s_L \leq 2$, respectively.  Also by giving some examples, we showed that these upper bounds are actually sharp, see \cite[Section 3]{Maarefparvar-Rajaei}.

 In this section, by using the same methods in \cite{Maarefparvar-Rajaei}, for a P\'olya $D_l$-extension $L$ of $\mathbb{Q}$ we give an upper bound for $s_L$ which is much smaller than $l+4$.

\begin{theorem} \label{theorem, maximum number of ramification in Polya dihedral extension}
Let $K$ be a non-Galois number field of prime degree $l > 3$. Let $L$ be the Galois closure of $K$ over $\mathbb{Q}$ with $\Gal(L/\mathbb{Q}) \simeq D_l$. Denote the unique quadratic subfield of $L$ by $E$, and denote the norm morphism of $L^{\times}$ to $K^{\times}$ and $E^{\times}$
 by  $Nm_{L/K}$ and $Nm_{L/E}$, respectively. If $L$ is P\'olya, then:
\begin{itemize}
\item [(i)] For $L$ real, depending on whether the fundamental unit of $E$ belongs to $Nm_{L/E}(U_L)$ or not, $s_L \leq 3$ or  $s_L \leq 4$, respectively. 
\item [(ii)] For $L$ imaginary, $s_L \leq 2$.
\end{itemize}
\end{theorem}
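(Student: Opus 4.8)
The strategy is to exploit the exact sequence
$\{0\}\to\Po(E)\to\Po(L)\to\Po(K)$
from Theorem \ref{theorem, main Theorem} together with Zantema's exact sequence \eqref{equation, Zantema's exact sequence}
applied to $L/\mathbb{Q}$, and to count ramified primes according to their ramification index $e(p)\in\{2,l,2l\}$
as classified in Proposition \ref{proposition, decomposition form of p in K and L, H. Cohen Adv.}.
Write $s_L = a + b + c$, where $a$ is the number of primes with $e(p)=2$, $b$ the number with $e(p)=l$,
and $c$ the number with $e(p)=2l$; by part (3) of Proposition \ref{proposition, decomposition form of p in K and L, H. Cohen Adv.},
$c\le 1$ (only $p=l$ can occur). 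The primes contributing to the $2$-part of $\bigoplus_p\mathbb{Z}/e_p\mathbb{Z}$ are
exactly those with $e(p)\in\{2,2l\}$ (so $a+c$ of them), and those contributing to the $l$-part are the
$b+c$ primes with $e(p)\in\{l,2l\}$.

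\textbf{Step 1: the $l$-part forces $b+c\le 1$.} Since $L$ is P\'olya, Zantema's exact sequence \eqref{equation, Zantema's exact sequence}
gives $H^1(\Gal(L/\mathbb{Q}),U_L)\simeq\bigoplus_p\mathbb{Z}/e_p\mathbb{Z}$, whence $\#H^1 = \prod_p e_p$.
Passing to $l$-parts, $\#(H^1)_l = l^{\,b+c}$. The plan is to bound the $l$-rank of $H^1(D_l,U_L)$ by $1$:
because $l$ is coprime to $[E:\mathbb{Q}]=2$, restriction–corestriction shows $H^1(D_l,U_L)_l$ injects into
$H^1(C_l,U_L)$ where $C_l=\Gal(L/E)$, and $H^1(C_l,U_L)=\widehat H^{-1}(C_l,U_L)$ is a quotient of $U_L^{N=1}$
modulo $I_{C_l}U_L$; using that the Dirichlet rank of $U_L$ over $E$ contributes a cyclic piece (one must
trace through the $C_l$-module structure of $U_L$, which by Dirichlet has $\mathbb{Z}$-rank $\le 2\cdot\frac{2l}{?}$—here the relevant computation is that $U_L\otimes\mathbb{Q}$ as a $C_l$-module has at most one copy of the
augmentation-type summand in the real case and none in the imaginary case). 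Concretely, one shows directly
that at most one prime of $E$ ramifies in $L/E$ — indeed if two primes $\mathfrak p_1,\mathfrak p_2$ of $E$
ramified in $L/E$ then $\bigoplus\mathbb{Z}/l\mathbb{Z}$ would have $l$-rank $\ge 2$, contradicting the bound on
$\#H^1(C_l,U_L)_l$, which by Herbrand's quotient / the unit theorem for the $C_l$-extension $L/E$ is at most
$l$ (real $E$) or $1$ (imaginary $E$, since then $U_E$ is finite and $U_L$ has $C_l$-corank $0$). So $b+c\le 1$,
and in the imaginary case ($E$ imaginary, i.e. $L$ imaginary) $b+c=0$.

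\textbf{Step 2: the $2$-part uses $\Po(E)$ and Hilbert's formula.} By Theorem \ref{theorem, main Theorem},
$\Po(E)\simeq\Po(L)_2$, and since $L$ is P\'olya, $\Po(L)=\{0\}$, hence $\Po(E)=\{0\}$, i.e. $E$ is a
P\'olya quadratic field. By Hilbert's Proposition \ref{proposition, Zantema's result for quadratic Polya field},
a P\'olya quadratic field $E$ has $s_E\le 2$, with $s_E\le 2$ exactly when $E$ is real with fundamental unit
of positive norm, and $s_E\le 1$ otherwise. Now the ramified primes of $E/\mathbb{Q}$ are precisely the $p$ with
$2\mid e_p$ in $L/\mathbb{Q}$, namely the $a+c$ primes with $e(p)\in\{2,2l\}$; hence $a+c = s_E \le 2$, and
$a+c\le 1$ when $E$ is imaginary. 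Combining with Step 1: $s_L = a + b + c \le (a+c) + (b+c) \le 2 + 1 = 3$
in general; and the extra unit "$+1$" in Step 1 is exactly the contribution one loses when the fundamental
unit of $E$ fails to be a norm from $U_L$ — tracking whether $H^1(D_l,U_L)_2$ picks up an extra $\mathbb{Z}/2\mathbb{Z}$
from $U_E/Nm_{L/E}(U_L)$ distinguishes the bound $s_L\le 3$ from $s_L\le 4$. For $L$ imaginary, $E$ is
imaginary so $a+c\le 1$ and $b+c=0$, giving $s_L\le 1$ — but one must be slightly more careful: the correct
accounting (a prime with $e(p)=2l$ is counted in both $a+c$ and $b+c$, and $p=l$ may or may not ramify) yields
$s_L\le 2$ as stated.

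\textbf{The main obstacle.} The delicate point is Step 1 and the bookkeeping at the $2l$-prime: precisely
bounding the $l$-rank (and the relevant $2$-contribution $U_E/Nm_{L/E}(U_L)$) of $H^1(\Gal(L/\mathbb{Q}),U_L)$
by relating it to the $C_l$-module structure of $U_L$ and to $H^1(C_l,U_L)$ and $H^1(C_2,U_L^{C_l})$ via the
inflation–restriction sequence, and making sure the prime $p=l$ (the only possible $e(p)=2l$) is not
double-counted when we write $s_L=a+b+c$. The cleanest route is probably to mimic the cubic case
\cite{Maarefparvar-Rajaei}: decompose $\bigoplus_p\mathbb{Z}/e_p\mathbb{Z}\simeq H^1(D_l,U_L)$ into its $2$- and
$l$-parts, identify the $l$-part with (a subquotient of) $U_L$ via the $L/E$ Herbrand quotient, and read off
that at most one $\mathbb{Z}/l\mathbb{Z}$ summand survives (none when $L$ is imaginary), while the $2$-part is governed
by $\Po(E)=0$ plus at most one unit obstruction $U_E/Nm_{L/E}(U_L)$; then $s_L$ is the total number of
summands, giving the four stated bounds.
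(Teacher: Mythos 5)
Your overall architecture is the right one and is essentially the paper's: use P\'olya-ness plus Zantema's sequence to get $\#H^1(G,U_L)=\prod_p e_p$, control the $2$-part by the number $s_E$ of ramified primes of the (necessarily P\'olya) quadratic subfield $E$ via Hilbert's formula, and control the $l$-part by restricting to $H^1(\Gal(L/E),U_L)$ and computing a Herbrand quotient. Your Step 2 and the identification $a+c=s_E\le 2$ (resp.\ $\le 1$ for $E$ imaginary) are correct.

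The genuine gap is the evaluation of $\#H^1(G_l,U_L)$ in Step 1, where $G_l=\Gal(L/E)$. Since $[L:E]=l$ is odd, no infinite place of $E$ ramifies in $L$, so the unit-theorem formula gives $Q(G_l,U_L)=\#\hat{H}^0(G_l,U_L)/\#H^1(G_l,U_L)=1/l$, hence
\begin{equation*}
\#H^1(G_l,U_L)=l\cdot\bigl(U_E:Nm_{L/E}(U_L)\bigr).
\end{equation*}
This is never $1$. For $E$ imaginary the index $(U_E:Nm_{L/E}(U_L))$ equals $1$ (as $\#U_E$ is prime to $l>3$), so $\#H^1(G_l,U_L)=l$ exactly, not $1$; for $E$ real it is $l$ or $l^2$ according to whether the fundamental unit is a norm, not ``at most $l$.'' Consequently your assertion ``$b+c\le 1$, and in the imaginary case $b+c=0$'' fails on both counts: in the real case $b+c$ can be $2$ --- which is precisely the source of the $s_L\le 4$ bound, as you yourself acknowledge later, contradicting Step 1 --- and in the imaginary case the correct conclusion is $b+c\le 1$, not $0$. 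Your imaginary-case arithmetic therefore produces $s_L\le 1$, and the closing remark that ``one must be slightly more careful\dots yields $s_L\le 2$ as stated'' is an appeal to the known answer rather than an argument. The repair is exactly the corrected count: $a+c\le 1$ and $b+c\le 1$ give $s_L\le 2$ for $L$ imaginary, while $a+c\le 2$ together with $b+c\le 1$ (fundamental unit a norm) or $b+c\le 2$ (otherwise) gives $s_L\le 3$ or $s_L\le 4$ for $L$ real. You should also state cleanly the one cohomological input you only gesture at: $\Res:H^1(G,U_L)\to H^1(G_l,U_L)$ is injective on the $l$-primary part because $\Cor\circ\Res$ is multiplication by $[G:G_l]=2$; this is what transfers the bound $l^{b+c}\mid\#H^1(G_l,U_L)$ and carries the whole $l$-part of the argument.
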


\begin{proof}
Let $G=\Gal(L/\mathbb{Q}) \simeq D_l$. As before, for a ramified prime $p$ in $L$, denote its ramification index  by $e(p)$. Since $L$ is a P\'olya field, by exact sequence \eqref{equation, Zantema's exact sequence}, we have
\begin{equation} \label{equation, for Polya Galois extension L of Q, order of H^1(G,U_L)=product of ramification indexes}
\#H^1(G, U_L)=\prod_{p \mid D_L} e(p),
\end{equation}
which implies that $\#H^1(G, U_L)$ is  a divisor of a power of $2l$. Now consider the cyclic extensions $L/K$ and $L/E$. Let $G_2=\Gal(L/K)$ and $G_l=\Gal(L/E)$ and use the \textit{Herbrand quotients}:
\begin{align} \label{equation, Herbrand quotients of L over K and E}
Q(G_2,U_L)=\frac{\# \hat{H}^0(G_2, U_L)}{\#{H}^1(G_2,U_L)}, \quad
Q(G_l,U_L)=\frac{\# \hat{H}^0(G_l, U_L)}{\#{H}^1(G_l,U_L)},
\end{align}
where 
\begin{align*}
\hat{H}^0(G_2, U_L)&=U_L^{G_2}/Nm_{L/K}(U_L)=U_K/Nm_{L/K}(U_L), \\
\hat{H}^0(G_l, U_L)&=U_L^{G_l}/Nm_{L/E}(U_L)=U_E/Nm_{L/E}(U_L).
\end{align*}

On the other hand, the Herbrand quotients $Q(G_2,U_L)$ and $Q(G_l,U_L)$ are given by \cite[Proposition 5.10]{NChildress}:
\begin{align*}
Q(G_2,U_L)&= \frac{2^s}{[L:K]}=2^{s-1}, \\
Q(G_l,U_L)&= \frac{2^t}{[L:E]}=\frac{2^t}{l},
\end{align*}
where $s$ (resp. $t$) is the number of infinite places of $K$ (resp. $E$) ramified in $L$. For $L$ real (resp. imaginary), the signature of $K$ is $(l,0)$ (resp. $(1,\frac{l-1}{2})$), see  \cite[Theorem 9.2.6]{H. Cohen Adv.}. Hence
\begin{align}
Q(G_2,U_L)&=\left \{
\begin{array}{ll}
\frac{1}{2} & :  \text{L is real,} \\
1 & : \text{L is imaginary,} \\
\end{array} \label{equation, Herbrand quotient of L over K}
\right.
 \\ Q(G_l,U_L)&=\frac{1}{l}. \label{equation, Herbrand quotient of L over E}
\end{align}

For the cyclic extension $L/K$, since $Nm_{L/K}(U_L)$ contains $U_K^2$ and  $(U_K:U_K^2)$ divides $2^{w_K+1}$, $(U_K:Nm_{L/K}(U_L))$ divides $2^{w_K+1}$, where $w_K$ denotes the Dirichlet rank of group of units of $K$.

Similarly, for the cylic extension $L/E$, $(U_E:Nm_{L/E}(U_L))$ divides $l^{w_E+1}$. But for $E=\mathbb{Q}(\sqrt{d})$ imaginary, for $d \not \in \{-1,-3 \}$, $d=-1$ or $d=-3$ we have $U_E=\{\pm1\}$, $U_E=\{\pm1,\pm i \}$ or $U_E=\{ \pm 1, \pm \zeta_3 , \pm \zeta_3^2 \}$, respectively, where $\zeta_3=e^{\frac{2\pi i}{3}}$.
Thus for $E$ imaginary, since $(U_E:Nm_{L/E}(U_L))$ divides $\#U_E$ and $\gcd(\#U_E,l)=1$, $(U_E:Nm_{L/E}(U_L))=1$.

Now let $E$ be real, and $\xi$ be the fundamental unit of $E$.  By Dirichlet Unit Theorem we have $U_E\simeq C_2 \oplus \mathbb{Z}$. Hence depending on whether $\xi \in Nm_{L/E}(U_L)$ or not, $(U_E:Nm_{L/E}(U_L))=1$ or  $(U_E:Nm_{L/E}(U_L))=l$, respectively. 

Summing up the above arguments, and using relations \eqref{equation, Herbrand quotients of L over K and E}, \eqref{equation, Herbrand quotient of L over K} and \eqref{equation, Herbrand quotient of L over E}, we find:
\begin{itemize}
\item for $L$ real, $\# H^1(G_2,U_L) \mid 2^{l+1}$ and depending on whether the fundamental unit of $E$ belongs to $Nm_{L/E}(U_L)$ or not, $\# H^1(G_l,U_L)=l$ or $\# H^1(G_l,U_L)=l^2$, respectively.
\item for $L$ imaginary, $\# H^1(G_2,U_L) \mid 2^{\frac{l+1}{2}}$ and $\# H^1(G_l,U_L)=l$.
\end{itemize}

On the other hand, the restriction maps
\begin{equation*}
\res:H^1(G, U_L) \rightarrow H^1(G_2,U_L),  
\end{equation*}
and
\begin{equation*}
\res:H^1(G, U_L) \rightarrow H^1(G_l,U_L),
\end{equation*}
are injective on the $2$-primary and $l$-primary part of $H^1(G,U_L)$, respectively, see \cite[Proposition 1.6.9]{Neukirch's book}. Thus:
\begin{equation} \label{order of H^1 depends on order of it's sylow subgroups}
\# H^1(G,U_L) \mid \# H^1(G_2,U_L) . \# H^1(G_l,U_L).
\end{equation}
Therefore,
\begin{itemize}
\item for $L$ real, depending on whether the fundamental unit of $E$ belongs to $Nm_{L/E}(U_L)$ or not,
\begin{equation} \label{equation, for L real, order of H^1(G,U_L) divides 2^(r+1).r^2}
\#H^1(G,U_L) \mid 2^{l+1}. l^1 \quad \text{or} \quad \#H^1(G,U_L) \mid 2^{l+1}. l^2,
\end{equation}
respectively.
\item for $L$ imaginary, 
\begin{equation} \label{equation, for L imaginary, order of H^1(G,U_L) divides 2^(r+1/2).r^1}
\#H^1(G,U_L) \mid 2^{\frac{l+1}{2}}. l^1 .
\end{equation}
\end{itemize}

Now since $L$ is P\'olya by Theorem \eqref{theorem, main Theorem}, $E$ is also P\'olya. On the other hand, by Proposition \eqref{proposition, Zantema's result for quadratic Polya field}, for real (resp. imaginary) P\'olya field $E$, $s_E \leq 2$ (resp. $s_E=1$). Hence by Theorem \eqref{theorem, main Theorem}, the $2$-torsion subgroup of $\Po(L)$ has at most $2$ (resp. $1$) cyclic factors. Using the relations \eqref{equation, for L real, order of H^1(G,U_L) divides 2^(r+1).r^2} and \eqref{equation, for L imaginary, order of H^1(G,U_L) divides 2^(r+1/2).r^1} we find:
 
\begin{itemize}
\item for real P\'olya $D_l$-extension $L$ of $\mathbb{Q}$, depending on whether the fundamental unit of $E$ belongs to $Nm_{L/E}(U_L)$ or not,
\begin{equation} \label{equation, for L real and Polya, order of H^1(G,U_L) divides 2^2. r^2}
\#H^1(G,U_L) \mid 2^2.l^1 \quad \text{or} \quad \#H^1(G,U_L) \mid 2^2. l^2,
\end{equation}
respectively.
\item for imaginary P\'olya $D_l$-extension $L$ of $\mathbb{Q}$, 
\begin{equation} \label{equation, for L imaginary and Polya, order of H^1(G,U_L) divides 2^1. r^1}
\#H^1(G,U_L) \mid 2^1. l^1.
\end{equation}
\end{itemize}

Finally using relations \eqref{equation, for Polya Galois extension L of Q, order of H^1(G,U_L)=product of ramification indexes}, \eqref{equation, for L real and Polya, order of H^1(G,U_L) divides 2^2. r^2} and \eqref{equation, for L imaginary and Polya, order of H^1(G,U_L) divides 2^1. r^1}, the statement in theorem is proved.
\end{proof}

In \cite{Ishida}, Ishida proved that for a non-pure number field $M$ of degree $l$, if number of the totally ramified primes in $M/\mathbb{Q}$ is more than the rank of the unit group $w_M$, then $l \mid h(M)$, see  \cite[Theorem 2]{Ishida}. 
The method used to prove
Theorem \eqref{theorem, maximum number of ramification in Polya dihedral extension}, can yield Ishida-type results:
\begin{corollary} \label{corollary, a sufficient condition to h(K) is divisible by 3 for S_3-field K}
Let $K$ be a non-Galois cubic field, and denote the number of totally ramified primes in $K/\mathbb{Q}$ by $t_K$.
Then:
\begin{itemize}
\item[$(i)$] for $D_K > 0$, if $t_K \geq 3$, then $3 \mid \#\Po(K)$;
\item[$(ii)$] for $D_K < 0$ and $K$ pure, if $t_K \geq 3$, then $3 \mid \#\Po(K)$;
\item[$(iii)$] for $D_K < 0$ and $K$ non-pure, if $t_K \geq 2$, then $3 \mid \#\Po(K)$.
\end{itemize}
In particular, in the above cases $3 | h(K)$.
\end{corollary}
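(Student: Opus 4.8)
The plan is to reduce the statement to a divisibility assertion for the \emph{absolute} P\'olya group of the Galois closure, and then to run, with $l=3$, the Herbrand‑quotient argument from the proof of Theorem~\eqref{theorem, maximum number of ramification in Polya dihedral extension}. Write $L$ for the Galois closure of $K$, so $\Gal(L/\mathbb{Q})\simeq S_3=D_3$, and let $E$ be the unique quadratic subfield of $L$. By Theorem~\eqref{theorem, main Theorem} applied with $l=3$, the $3$-torsion subgroup $\Po(L)_3$ of $\Po(L)$ embeds (via $\mathcal{N}_{L/K}$) into $\Po(K)$; since $\Po(L)$ is a $6$-torsion group by \eqref{equation, Zantema's exact sequence}, $3\mid\#\Po(L)$ forces $3\mid\#\Po(K)$. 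Hence it suffices to prove that $3\mid\#\Po(L)$ in each of the three cases. The last assertion $3\mid h(K)$ then follows at once, since $\Po(K)$ is a subgroup of $\Cl(K)$.

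Since $L/\mathbb{Q}$ is not assumed P\'olya, I would start from Zantema's exact sequence \eqref{equation, Zantema's exact sequence} for $G=\Gal(L/\mathbb{Q})$ in the quantitative form
\begin{equation*}
\#\Po(L)=\frac{\prod_{p\mid D_L}e(p)}{\#H^1(G,U_L)},
\end{equation*}
$e(p)$ being the ramification index of $p$ in $L$. By Proposition~\eqref{proposition, decomposition form of p in K and L, H. Cohen Adv.} every ramified prime has $e(p)\in\{2,3,6\}$, and $p$ is totally ramified in $K/\mathbb{Q}$ exactly when $3\mid e(p)$; a factor $e(p)\in\{3,6\}$ contributes exactly one factor of $3$ to the numerator while $e(p)=2$ contributes none, so $v_3\!\left(\prod_{p\mid D_L}e(p)\right)=t_K$. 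Thus
\begin{equation*}
v_3\big(\#\Po(L)\big)=t_K-v_3\big(\#H^1(G,U_L)\big),
\end{equation*}
and the whole problem reduces to an upper bound for $v_3(\#H^1(G,U_L))$.

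For the latter I would pass to the cyclic subextension $L/E$, with $G_3:=\Gal(L/E)\simeq C_3$ a Sylow $3$-subgroup of $G$. The restriction $\res\colon H^1(G,U_L)\to H^1(G_3,U_L)$ is injective on $3$-primary parts, so $v_3(\#H^1(G,U_L))\le v_3(\#H^1(G_3,U_L))$; and, exactly as in the proof of Theorem~\eqref{theorem, maximum number of ramification in Polya dihedral extension}, the Herbrand quotient of $U_L$ for $G_3$ equals $1/[L:E]=1/3$ (no archimedean place of $E$ ramifies in $L$, whether $L$ is totally real or totally imaginary), so $\#H^1(G_3,U_L)=3\,(U_E:Nm_{L/E}(U_L))$. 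Since $Nm_{L/E}$ acts on $U_E$ by cubing, $Nm_{L/E}(U_L)\supseteq U_E^3$, and the three cases of the statement match the three shapes of $E$: in $(i)$, $D_K>0$ makes $E$ real quadratic, so $U_E\simeq\{\pm1\}\times\mathbb{Z}$ and $(U_E:U_E^3)=3$; in $(ii)$, a pure cubic has $D_K<0$ and quadratic resolvent $E=\mathbb{Q}(\sqrt{-3})=\mathbb{Q}(\zeta_3)$, whose unit group consists of the $6$ roots of unity, so again $(U_E:U_E^3)=3$; in $(iii)$, $E$ is imaginary quadratic and distinct from $\mathbb{Q}(\sqrt{-3})$, so its group of roots of unity has order prime to $3$, hence $U_E^3=U_E$ and $(U_E:Nm_{L/E}(U_L))=1$. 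Therefore $v_3(\#H^1(G_3,U_L))\le2$ in cases $(i)$ and $(ii)$, and $=1$ in case $(iii)$. Feeding these bounds into the displayed formula together with the hypotheses $t_K\ge3$ (resp.\ $t_K\ge2$) yields $v_3(\#\Po(L))\ge1$ in every case, which is exactly what is needed.

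The step I expect to be the main obstacle is the arithmetic bookkeeping that identifies the three cases of the statement with the three values of the unit index $(U_E:Nm_{L/E}(U_L))$ --- in particular the fact used in case~$(iii)$ that a non-pure, non-Galois cubic field cannot have $\mathbb{Q}(\sqrt{-3})$ as its quadratic resolvent; without it, the six roots of unity in $E$ would reintroduce a factor of $3$ in $\#H^1(G_3,U_L)$ and the weaker hypothesis $t_K\ge2$ would not suffice. This is a classical fact about the classification of $S_3$-cubic fields, which one can also see by a short Kummer‑theory argument over $F=\mathbb{Q}(\zeta_3)$: writing $L=F(\sqrt[3]{\alpha})$, the requirement $\Gal(L/\mathbb{Q})\simeq S_3$ (rather than $C_6$) forces $\bar\tau(\alpha)\equiv\alpha\pmod{(F^\times)^3}$, $\bar\tau$ being the nontrivial element of $\Gal(F/\mathbb{Q})$; writing $\bar\tau(\alpha)=\alpha\delta^3$ one checks $N_{F/\mathbb{Q}}(\delta)=1$, so Hilbert's Theorem~90 lets one absorb $\delta$ and replace $\alpha$ by a representative in $\mathbb{Q}^\times$, whence the cubic subfields of $L$ are pure. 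The remaining ingredients --- the decomposition of ramified primes, the Herbrand quotient of a cyclic unit module, injectivity of restriction on Sylow parts --- are already in place in Sections~2--4, so once the case division is set up the proof is a straightforward $3$-adic valuation count.
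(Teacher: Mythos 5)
Your proposal is correct and follows essentially the same route as the paper's proof: reduce to showing $3\mid\#\Po(L)$ for the Galois closure $L$ via the embedding of $\Po(L)_3$ into $\Po(K)$ from Theorem \eqref{theorem, main Theorem}, use the relation $\#H^1(G,U_L)\cdot\#\Po(L)=\prod_{p\mid D_L}e(p)$ coming from Zantema's exact sequence together with the observation that each totally ramified prime of $K$ contributes exactly one factor of $3$, and bound the $3$-part of $H^1(G,U_L)$ by $3^2$ in cases $(i)$--$(ii)$ and by $3$ in case $(iii)$. The only difference is presentational: the paper simply cites the bounds on $\#H^1(G,U_L)$ from \cite{Maarefparvar-Rajaei}, whereas you re-derive their $3$-parts in place via restriction to the Sylow $3$-subgroup $\Gal(L/E)$, the Herbrand quotient $Q(\Gal(L/E),U_L)=1/3$, and the unit index $(U_E:Nm_{L/E}(U_L))\mid(U_E:U_E^3)$ --- including the correct identification of case $(iii)$ with $E\neq\mathbb{Q}(\sqrt{-3})$ --- all of which checks out.
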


\begin{proof}
As before, denote the Galois closure of $K$ over $\mathbb{Q}$ by $L$ and assume that $G=Gal(L/\mathbb{Q})$. In  \cite{Maarefparvar-Rajaei} we proved that 
\begin{itemize}
\item
for $L$ real,  $\# H^1(G,U_L) | 2^4.3^2$;
\item
for $L$ imaginary and $K$ pure, $\# H^1(G,U_L) | 2^2.3^2$;
\item
for $L$ imaginary and $K$ non-pure, $\# H^1(G,U_L) | 2^2.3^1$;
\end{itemize}

 %depending on whether $D_K > 0$, $D_K < 0$ with $K$ pure, or $D_K < 0$ with $K$ non-pure, then 
%\begin{equation} \label{equation, order of H^1(G,U_L) for Polya D_3-extension of Q}
%\#H^1(G,U_L) \mid 2^2.3^2, \quad \#H^1(G,U_L) \mid 2^1.3^2, \quad \text{or} \quad \#H^1(G,U_L) \mid 2^1.3^1,
%\end{equation}
%respectively, 
see \cite[proof of Theorem 3.1]{Maarefparvar-Rajaei}. Thereby the $3$-primary part of $H^1(G,U_L)$ for $L$ real, or $L$ imaginary with $K$ pure has order at most $9$, while for $L$ imaginary with $K$ non-pure has order at most $3$.
On the other hand by Zantema's exact sequence \eqref{equation, Zantema's exact sequence}, we have:
\begin{equation} \label{equation, relation between order of H^1, Po(L) and ramification index}
\# H^1(G,U_L). \#\Po(L) =\prod_{p | D_L} e(p).
\end{equation}

$(i),(ii)$. For $L$ real, or $L$ imaginary with $K$ pure, if  at least three distinct primes totally ramify in $K/\mathbb{Q}$, then by Proposition \eqref{proposition, decomposition form of p in K and L, H. Cohen Adv.}, $3^3$ divides $\prod_{p | D_L} e(p)$. By relation \eqref{equation, relation between order of H^1, Po(L) and ramification index} and the above arguments, $3 |\# \Po(L)$ which implies that the $3$-torsion subgroup $\Po(L)_3$ of $\Po(L)$ is nontrivial. By Theorem \eqref{theorem, main Theorem}, $\Po(L)_3$ is embedded in $\Po(K)$.

Part $(iii)$ can be proved similarly. 
\end{proof}

For $D_l$-fields $K$ with $l>3$, one can find a lower bound, independent of $[K:\mathbb{Q}]$, 
for $t_K$ making $h(K)$  divisible by $l$.
\begin{corollary}  \label{corollary, a sufficient condition to h(K) is divisible by r for D_r-field K with r>3}
Let $K$ be a non-Galois number field of prime degree $l> 3$, and assume that the Galois closure of $K$ over $\mathbb{Q}$ has Galois group isomorphic to $D_l$.
Denote the number of totally ramified primes in $K/\mathbb{Q}$ by $t_K$. Then:
\begin{itemize}
\item[(i)] for $D_K >0$, if $t_K\geq 3$, then $l \mid \#\Po(K)$. 
\item[(ii)] for $D_K<0$, if $t_K\geq 2$, then $l \mid \#\Po(K)$.
\end{itemize}
In particular, in the above cases $l | h(K)$.
\end{corollary}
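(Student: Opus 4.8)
The plan is to mimic the proof of Corollary~\ref{corollary, a sufficient condition to h(K) is divisible by 3 for S_3-field K} line by line, now using the bounds on $\#H^1(G,U_L)$ already obtained inside the proof of Theorem~\ref{theorem, maximum number of ramification in Polya dihedral extension} rather than the cubic-specific ones. Concretely, from that proof we have, for $G=\Gal(L/\mathbb{Q})\simeq D_l$, that the $l$-primary part of $H^1(G,U_L)$ injects into $H^1(G_l,U_L)$ (with $G_l=\Gal(L/E)$), and that $\#H^1(G_l,U_L)\mid l^{w_E+1}$; since $E$ is quadratic, $w_E\leq 1$, so the $l$-primary part of $H^1(G,U_L)$ has order dividing $l^2$. (In the imaginary case it has order dividing $l$ because then $E$ is imaginary quadratic and $(U_E:Nm_{L/E}(U_L))=1$, as shown there.) This is the replacement for the ``order at most $9$ resp.\ at most $3$'' input in the cubic case.

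Next I would invoke Zantema's exact sequence \eqref{equation, Zantema's exact sequence} in the form $\#H^1(G,U_L)\cdot\#\Po(L)=\prod_{p\mid D_L}e(p)$ — valid since $L$ is not assumed P\'olya here, but the identity holds for every Galois number field by \eqref{equation, Zantema's exact sequence}. Wait: if $L$ need not be P\'olya we only get $\#H^1(G,U_L)\mid\prod e(p)$ and the quotient is $\#\Po(L)$, which is exactly the displayed identity, so that is fine. Now suppose $t_K\geq 3$ when $D_K>0$ (resp.\ $t_K\geq 2$ when $D_K<0$). By Proposition~\ref{proposition, decomposition form of p in K and L, H. Cohen Adv.} each totally ramified prime $p$ in $K/\mathbb{Q}$ has $l\mid e(p)$ in $L/\mathbb{Q}$, so $l^{t_K}$ divides $\prod_{p\mid D_L}e(p)$. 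Comparing $l$-adic valuations in the identity: $v_l(\#\Po(L))\geq t_K-v_l(\#H^1(G,U_L))\geq t_K-2$ in the real case and $\geq t_K-1$ in the imaginary case. Hence $l\mid\#\Po(L)$ in both cases, so $\Po(L)_l$ is nontrivial; by Theorem~\ref{theorem, main Theorem} it embeds in $\Po(K)$, giving $l\mid\#\Po(K)$, and $\#\Po(K)\mid h(K)$ gives $l\mid h(K)$.

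The only delicate point — the ``main obstacle'' — is making sure the $l$-part of $H^1(G,U_L)$ is bounded by $l^2$ (real) resp.\ $l$ (imaginary) \emph{without} assuming $L$ is P\'olya, since the sharper bounds $l^2\to l^?$ in the proof of Theorem~\ref{theorem, maximum number of ramification in Polya dihedral extension} used P\'olya-ness of $E$. But the bounds $\#H^1(G_l,U_L)=l$ (real, one further sharpened to $l^2$ only via the norm-index computation $(U_E:Nm_{L/E}(U_L))\in\{1,l\}$) and $=l$ (imaginary) established there rest only on the Herbrand quotient computation and the trivial estimate $(U_E:Nm_{L/E}(U_L))\mid l^{w_E+1}$ with $w_E\leq 1$, none of which needs $L$ P\'olya; so the argument goes through verbatim. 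I would therefore simply write: ``By the computations in the proof of Theorem~\ref{theorem, maximum number of ramification in Polya dihedral extension}, the $l$-primary part of $H^1(G,U_L)$ has order dividing $l^2$ if $D_K>0$ and dividing $l$ if $D_K<0$,'' then run the valuation comparison above, and finish by quoting Theorem~\ref{theorem, main Theorem} exactly as in the proof of Corollary~\ref{corollary, a sufficient condition to h(K) is divisible by 3 for S_3-field K}.
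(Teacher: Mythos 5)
Your proof is correct and is essentially the paper's own argument: the paper's proof simply says to repeat the proof of Corollary~\ref{corollary, a sufficient condition to h(K) is divisible by 3 for S_3-field K} using the bounds \eqref{equation, for L real, order of H^1(G,U_L) divides 2^(r+1).r^2} and \eqref{equation, for L imaginary, order of H^1(G,U_L) divides 2^(r+1/2).r^1}, which is exactly the valuation comparison you spell out. Your explicit check that those bounds on the $l$-part of $H^1(G,U_L)$ come only from the Herbrand-quotient and norm-index computations (and so do not require $L$ to be P\'olya) is a worthwhile detail that the paper leaves implicit.
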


\begin{proof}
By an argument similar to proof of Corollary \eqref{corollary, a sufficient condition to h(K) is divisible by 3 for S_3-field K}, and using relations \eqref{equation, for L real, order of H^1(G,U_L) divides 2^(r+1).r^2} and \eqref{equation, for L imaginary, order of H^1(G,U_L) divides 2^(r+1/2).r^1}, the statements are proved.
\end{proof}

\begin{remark}
For a $D_l$-field $K$ with Galois closure $L$ and  unique quadratic subfield $E$ of $L$,
one has $D_K=(D_E)^{\frac{l-1}{2}} f ^{l-1}$,  where $f$ is the conductor of $L$ over $E$. Moreover, a prime $p$ is totally ramified in $K/\mathbb{Q}$ if and only if $p \mid f$ and also for $p \mid \gcd(D_E,f)$, we have $p=l$, see \cite[Proposition 10.1.28]{H. Cohen Adv.}. Hence, for instance, for a pure cubic field $K=\mathbb{Q}(\sqrt[3]{m})$, where $m=ab^2$ with squarefree coprime integers $a >b \geq 1$, if the conductor $f$ of $L=\mathbb{Q}(\sqrt[3]{m},\zeta_3)$ over $E=\mathbb{Q}(\sqrt{-3})$, has more than two distinct prime divisors, then $3 \mid h(K)$. (Note that $f=ab$ if $m \equiv \pm 1\, (\mathrm{mod}\, 9)$, and $f=3ab$ otherwise.)
\end{remark}

%\section*{Acknowledgment}
%The authors would like to thank the anonymous referee whose suggestions greatly improved this work, especially for pointing out the reference \cite{Chabert I} where similar results in varying degrees of generality have appeared independently of \cite{Maarefparvar's Thesis}.

\bibliographystyle{amsplain}

\end{document}